\newtheorem{thm}{Theorem}[section]
\newtheorem{lem}[thm]{Lemma}
\newtheorem{defi}[thm]{Definition}
\newtheorem{cor}[thm]{Corollary}
\newtheorem{quest}[thm]{Question}
\begin{document}
	
\title{Connes Integration Formula without singular traces}

\author[]{F. Sukochev}
\address{School of Mathematics and Statistics, University of New South Wales, Kensington,  2052, Australia}
\email{f.sukochev@unsw.edu.au}

\author[]{D. Zanin}
\address{School of Mathematics and Statistics, University of New South Wales, Kensington,  2052, Australia}
\email{d.zanin@unsw.edu.au}
	
\begin{abstract}
A version of Connes Integration Formula which provides concrete asymptotics of the eigenvalues is given. This radically extending the class of quantum-integrable functions on compact Riemannian manifolds.
\end{abstract}	

\maketitle

\section{Introduction}

Traditional noncommutative integration theory is based on normal linear functionals on von Neumann algebras, see \cite{Se2} and the monographs \cite{BR}, \cite{Ped}, \cite{T} (among many).  So it is somewhat surprising, and a disparity, that the formula (for some trace $\varphi$ on the ideal $\mathcal{L}_{1,\infty}$ and for some fixed $T\in\mathcal{L}_{1,\infty}$)
\begin{equation}\label{generic integration formula}
\varpi(A)=\varphi(AT),\quad A\in B(H),
\end{equation}
with its obscured normality, and not the formula (for some fixed $T\in\mathcal{L}_1$)
$$\varpi(A)={\rm Tr}(AT),\quad A\in B(H),$$
appears as the analogue of integration in noncommutative geometry.  That it does is due to numerous results of
A.~Connes achieved with the Dixmier trace, see \cite{C3}, \S IV in \cite{bigredbook} and \cite{CM} (as a sample).  In Connes' noncommutative geometry the formula \eqref{generic integration formula} has been termed the noncommutative integral, see e.g. p.297 in \cite{GBVF} or p.478 in \cite{Hawk}, due to the link to noncommutative residues in differential geometry described by the theorem of Connes, see Theorem 1 in \cite{C3} or Theorem 7.18 on p.293 in \cite{GBVF}. Below, we describe the special case of Connes theorem, known as Connes Integration Formula.

Let $(X,g)$ be a compact $d$-dimensional Riemannian manifold (see e.g. p.11 in \cite{Rosenberg}) and let ${\rm vol}_g$ be the Riemannian volume (see e.g. p.15 in \cite{Rosenberg}). Let $\Delta_g$ be the Laplace-Beltrami operator on $X$ (see Section 2.4 in \cite{Taylor}). An integration formula due to Connes\footnote{Connes used a special class of traces known as Dixmier traces. The formula, as stated, appears in Theorem 11.7.10 in \cite{LSZ-book}.} reads as follows.
$$\varphi(M_f(1-\Delta_g)^{-\frac{d}{2}})=\frac{{\rm Vol}(\mathbb{S}^{d-1})}{d(2\pi)^d}\int_X fd{\rm vol}_g,\quad f\in C^{\infty}(X),$$
where ${\rm vol}_g$ is the volume form corresponding to the Riemannian metric $g$ and where $\varphi$ is a positive normalised trace on the ideal $\mathcal{L}_{1,\infty}.$ By the tracial property, one can rewrite the preceding formula as
$$\varphi((1-\Delta_g)^{-\frac{d}{4}}M_f(1-\Delta_g)^{-\frac{d}{4}})=\frac{{\rm Vol}(\mathbb{S}^{d-1})}{d(2\pi)^d}\int_X fd{\rm vol}_g,\quad f\in C^{\infty}(X).$$

The following question was asked by Connes during his 70-th anniversary conference.
\begin{quest}\label{connes question} Is it possible to prove directly an asymptotic formula for the eigenvalues of the operator
$$M_f(1-\Delta_g)^{-\frac{d}{2}}\mbox{ or }(1-\Delta_g)^{-\frac{d}{4}}M_f(1-\Delta_g)^{-\frac{d}{4}}$$
which allows to deduce the Integration Formula without involving ultrafilters (extended limits, Banach limits or similar tools)?
\end{quest}

In this paper, we answer the Question \ref{connes question} in the affirmative. In fact, the following result radically extends the class of functions which admit a non-commutative integral. It allows to write Connes Integration Formula without using singular traces.

Consider the finite measure space $(X,{\rm vol}_g).$ Let $M(t)=t\log(e+t),$ $t>0,$ and let $L_M(X,{\rm vol}_g)$ be the Orlicz space associated with this function.

\begin{thm}\label{cif manifold} Let $(X,g)$ be a compact $d$-dimensional Riemannian manifold and let $\Delta_g$ be the Laplace-Beltrami operator. For every real-valued $f\in L_M(X,{\rm vol}_g),$ we have
$$\lim_{t\to\infty}t\mu\Big(t,\Big((1-\Delta_g)^{-\frac{d}{4}}M_f(1-\Delta_g)^{-\frac{d}{4}}\Big)_+\Big)=\frac{{\rm Vol}(\mathbb{S}^{d-1})}{d(2\pi)^d}\int_X f_+d{\rm vol}_g,$$ 
$$\lim_{t\to\infty}t\mu\Big(t,\Big((1-\Delta_g)^{-\frac{d}{4}}M_f(1-\Delta_g)^{-\frac{d}{4}}\Big)_-\Big)=\frac{{\rm Vol}(\mathbb{S}^{d-1})}{d(2\pi)^d}\int_X f_-d{\rm vol}_g.$$ 	
\end{thm}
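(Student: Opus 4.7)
The plan has three stages: localize from $(X,g)$ to Euclidean space, establish the asymptotic for smooth compactly supported $h$ via Weyl's law for self-adjoint pseudodifferential operators, and extend to $h\in L_M$ by a sandwich argument driven by a sharp Cwikel-type estimate. For the first stage, a finite atlas with partition of unity $\{\chi_j^2\}$ decomposes $T_f:=(1-\Delta_g)^{-d/4}M_f(1-\Delta_g)^{-d/4}$ as a sum of sandwiched local pieces; pseudodifferential commutator calculus shows that replacing the manifold operator by the Euclidean $(1-\Delta)^{-d/4}$ on each chart (with density correction $\sqrt{\det g}$) produces errors that are pseudodifferential of total order $-d-1$, with singular values $O(n^{-1-1/d})=o(1/t)$. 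The theorem is thereby reduced to its Euclidean version for $T_h:=(1-\Delta)^{-d/4}M_h(1-\Delta)^{-d/4}$ with $h\in L_M(\mathbb{R}^d)$ compactly supported.

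\emph{Smooth case via Weyl's law.} For $h\in C_c^\infty(\mathbb{R}^d)$ real-valued, $T_h$ is a classical self-adjoint pseudodifferential operator of order $-d$ with principal symbol $h(x)|\xi|^{-d}$. Weyl's law gives
$$\#\{n : \lambda_n^+(T_h) > \lambda\}\sim(2\pi)^{-d}{\rm vol}\{(x,\xi) : h(x)|\xi|^{-d} > \lambda\}=\frac{c_d}{\lambda}\int h_+,$$
with $c_d={\rm Vol}(\mathbb{S}^{d-1})/(d(2\pi)^d)$, the inner volume coming from the ball $|\xi|<(h_+(x)/\lambda)^{1/d}$. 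Inverting yields $n\lambda_n^+(T_h)\to c_d\int h_+$, and the negative part is symmetric: positive and negative eigenvalues are separated automatically by the sign of the principal symbol.

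\emph{Passage to $L_M$.} The key analytic input is the Cwikel-type estimate $\|M_g(1-\Delta)^{-d/2}\|_{1,\infty}\le C\|g\|_{L_M}$, whose validity is precisely why $M(t)=t\log(e+t)$ is chosen. For $h\in L_M$ compactly supported, approximate $h_\pm$ from below by $h_{\pm,n}\in C_c^\infty$ with disjoint supports, $0\le h_{\pm,n}\nearrow h_\pm$, and $\|h_\pm-h_{\pm,n}\|_{L_M}\to 0$; set $h_n=h_{+,n}-h_{-,n}$. Weyl's inequality $\lambda_n^+(T_h)\le\lambda_{n-m+1}^+(T_{h_n})+\mu(m,(T_{h-h_n})_+)$ with $m=\lfloor\epsilon n\rfloor+1$, together with the preceding stage and the Cwikel bound, yields
$$\limsup_{t\to\infty}t\mu(t,(T_h)_+)\le\frac{c_d\int h_{+,n}}{1-\epsilon}+\frac{C\|h-h_n\|_{L_M}}{\epsilon};$$
letting $n\to\infty$ and $\epsilon\to 0$ gives $c_d\int h_+$. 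The matching lower bound uses the symmetric Weyl inequality $\lambda_n^+(T_{h_n})\le\lambda_{n-m+1}^+(T_h)+\mu(m,(T_{h-h_n})_-)$.

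\emph{Main obstacle.} The functional $T\mapsto\lim_t t\mu(t,T_\pm)$ is not continuous on $\mathcal{L}_{1,\infty}$, so approximation in $\|\cdot\|_{1,\infty}$ alone cannot propagate the asymptotic; the sandwich above circumvents this via a quantitative Weyl split, but rests squarely on the sharp Cwikel estimate in the correct Orlicz norm. Establishing this estimate with the right constant is the principal technical work, as $L_M$ with $M(t)=t\log(e+t)$ is the largest Orlicz scale for which $M_h(1-\Delta)^{-d/2}$ remains in $\mathcal{L}_{1,\infty}$.
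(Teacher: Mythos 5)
Your approach differs from the paper's in two ways that are worth recording, but it also contains two genuine gaps that break the argument as written.

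\textbf{What is different.} You try to get the constant $c_d=\tfrac{\mathrm{Vol}(\mathbb S^{d-1})}{d(2\pi)^d}$ directly from a Weyl/phase-volume formula for the classical pseudodifferential operator $(1-\Delta)^{-d/4}M_h(1-\Delta)^{-d/4}$ (smooth $h$), whereas the paper only extracts the \emph{existence} of $\lim_t t\mu(t,\cdot)$ from Birman--Solomyak, and then identifies the value by evaluating a normalised continuous trace and invoking the Connes trace theorem (Theorem~11.7.10 of \cite{LSZ-book}) together with the DFWW commutator identity $[\mathcal L_{2,\infty},\mathcal L_{2,\infty}]=[\mathcal L_{1,\infty},\mathcal L_\infty]$. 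Your route, if it could be carried out, would be more in the spirit of ``no singular traces,'' since the paper's proof still uses singular traces internally. Also, your ``sandwich'' via Ky~Fan's inequality for the positive eigenvalues is, in substance, the same estimate that the paper isolates as Lemma~\ref{limit of good operators mu lemma} / Corollary~\ref{limit of good operators corollary}; so your comment that ``the functional $T\mapsto\lim_t t\mu(t,T_\pm)$ is not continuous on $\mathcal L_{1,\infty}$'' is not accurate — the paper's Lemma~\ref{limit of good operators mu lemma} shows precisely that it \emph{is} continuous on the set where the limit exists, by the same quantitative Weyl-type split you use.

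\textbf{Gap 1: the stated Cwikel estimate is false.} You write the key estimate as
$\|M_g(1-\Delta)^{-d/2}\|_{1,\infty}\le C\|g\|_{L_M}$. For $g\in L_M\setminus L_2$ the operator $M_g(1-\Delta)^{-d/2}$ is not even in $\mathcal L_2$, hence not in $\mathcal L_{1,\infty}$ (the paper cites Theorem~2.5 of \cite{LPS} exactly for this). The estimate that actually holds, and which the paper proves as Theorem~\ref{solomyak cwikel estimate manifold}, is the \emph{symmetric} one: $\|M_f(1-\Delta_g)^{-d/4}\|_{2,\infty}\le C\|f\|_{L_M^{(2)}}$, equivalently $\|(1-\Delta_g)^{-d/4}M_f(1-\Delta_g)^{-d/4}\|_{1,\infty}\le C\|f\|_{L_M}$. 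This is not a notational slip that can be fixed silently: going beyond $L_2$ is the whole point of the symmetric form, and your sandwich step must be phrased in terms of $\|(1-\Delta_g)^{-d/4}M_{h-h_n}(1-\Delta_g)^{-d/4}\|_{1,\infty}$, not $\|M_{h-h_n}(1-\Delta)^{-d/2}\|_{1,\infty}$.

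\textbf{Gap 2: the localization to Euclidean space cannot be done at the $L_M$ level.} You claim that pseudodifferential commutator calculus shows the localization errors are ``pseudodifferential of total order $-d-1$'' and hence $o(1/t)$. That calculus applies to operators with smooth coefficients. When the middle factor is $M_f$ with $f\in L_M\setminus L_\infty$, the error terms have the schematic form $[M_{\chi_j},(1-\Delta_g)^{-d/4}]\,M_f\,(1-\Delta_g)^{-d/4}$; the outer commutator is a pseudodifferential operator of order $-d/4-1$, but $M_f$ is not a pseudodifferential operator, so the orders cannot simply be added. Controlling such a product in $(\mathcal L_{1,\infty})_0$ would require a Cwikel-type estimate for $M_f$ against a different-order operator, which you do not have. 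The paper avoids this by localizing only for bounded (indeed smooth) $f$ — where the operator $(1-\Delta_g)^{-d/4}M_f(1-\Delta_g)^{-d/4}$ genuinely is a classical pseudodifferential operator of order $-d$ — and then passing from bounded to $L_M$ via Theorem~\ref{solomyak cwikel estimate manifold} and Corollary~\ref{limit of good operators corollary}. Your plan has the order of operations reversed (localize first, then approximate), and the first step does not go through for non-smooth $f$. You should instead do the smooth/bounded case on the manifold first (where the mixing of positive and negative parts is handled by the paper's Lemmas~\ref{positive part abstract lemma} and \ref{commutator small lemma}, not by your assertion that they ``are separated automatically by the sign of the principal symbol''), and only then approximate in $L_M$.
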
	
Here, $t\to\mu(t,T)$ is the singular value function of the operator $T$ (see the definition in Subsection \ref{ideal subsection}).

Theorem \ref{cif manifold} strenghtens/complements a number of earlier results in the literature (e.g. Theorems 2.8 and 5.9 in \cite{LPS}, Theorem 1.7 in \cite{KLPS}, Theorem 11.7.10 in \cite{LSZ-book}, Theorems 1.1 and 1.2 in \cite{LSZ-last-kalton}).

We caution the reader that the symmetric form of Connes Integration Formula is necessary if one wants to integrate the functions which do not belong to $L_2(X,{\rm vol}_g).$ For functions from $L_2(X,{\rm vol}_g),$ the (asymmetric) integration formula appeared in Corollary 7.24 in \cite{KLPS} (see also Theorem 11.7.10 in \cite{LSZ-book}). It is established in Theorem 2.5 in \cite{LPS} that, for $f\notin L_2(X,{\rm vol}_g),$ the operator $M_f(1-\Delta_g)^{-\frac{d}{2}}$ cannot belong to $\mathcal{L}_2$ (and, therefore, to $\mathcal{L}_{1,\infty}$). Thus, Theorem \ref{cif manifold} yields a non-trivial extension of integration formulae from \cite{LPS} and \cite{KLPS} to the realm where the latter formulae are false.

The proof of Theorem \ref{cif manifold} is based on a number of recent advances and dicovery of a connection of old Birman and Solomyak results with our theme.

In \cite{Solomyak1995}, Solomyak proved the following specific Cwikel-type estimate in even dimension:
\begin{equation}\label{solomyak estimate}
\Big\|M_f(1-\Delta_{\mathbb{T}^d})^{-\frac{d}{4}}\Big\|_{2,\infty}\leq c_d\|f\|_{L_M^{(2)}},\quad f\in L_M^{(2)}(\mathbb{T}^d).
\end{equation}
Here, $\mathcal{L}_{2,\infty}$ is the weak Hilbert-Schmidt class and $L_M^{(2)}$ is the $2$-convexification of the Orlicz space $L_M.$ This result was extended to an arbitrary dimension in \cite{SZ-solomyak}.

In this paper, we extend the estimate \eqref{solomyak estimate} to an arbitrary compact Riemannian manifold.

\begin{thm}\label{solomyak cwikel estimate manifold} Let $(X,g)$ be a compact $d$-dimensional Riemannian manifold and let $\Delta_g$ be the Laplace-Beltrami operator. For every $f\in L_M^{(2)}(X,{\rm vol}_g),$ we have
$$\Big\|M_f(1-\Delta_g)^{-\frac{d}{4}}\Big\|_{2,\infty}\leq C_{X,g}\|f\|_{L_M^{(2)}}.$$
\end{thm}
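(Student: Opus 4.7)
The plan is to reduce the manifold estimate to the Cwikel-type bound on $\mathbb{R}^d$ proved in \cite{SZ-solomyak}, by localizing via a partition of unity and comparing $(1-\Delta_g)^{-d/4}$ to the flat Bessel potential on each coordinate chart.

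Fix a finite atlas $\{(U_i,\phi_i)\}_{i=1}^N$ of $X$ with a subordinate smooth partition of unity $\{\chi_i\}$ and cutoffs $\psi_i\in C_c^\infty(U_i)$ equal to $1$ on $\mathrm{supp}(\chi_i)$. Writing $f=\sum_i\chi_i f$, the quasi-triangle inequality of $\mathcal{L}_{2,\infty}$ combined with the monotonicity $\|\chi_i f\|_{L_M^{(2)}}\leq\|f\|_{L_M^{(2)}}$ reduces the theorem to the local estimate
$$\|M_h\cdot M_{\psi_i}(1-\Delta_g)^{-d/4}\|_{2,\infty}\leq C_i\|h\|_{L_M^{(2)}}$$
for each $i$ and each $h\in L_M^{(2)}$ supported in $U_i$. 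Transporting this via $\phi_i$ and smoothly extending the pushed-forward metric so that it coincides with the Euclidean one outside a compact set yields a uniformly elliptic second-order operator $L_i$ on $\mathbb{R}^d$ (or, after compactification, on a sufficiently large torus) of bounded geometry with Euclidean asymptotics.

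The comparison between $(1-L_i)^{-d/4}$ and the flat $(1-\Delta_0)^{-d/4}$ is carried out via the resolvent identity combined with a Balakrishnan-type integral representation for fractional powers, which produces
$$(1-L_i)^{-d/4}=(1-\Delta_0)^{-d/4}+R_i$$
with $R_i$ of strictly lower pseudodifferential order than $-d/2$. The leading flat term is directly covered by \cite{SZ-solomyak}. For the remainder, $M_h R_i$ has a kernel whose diagonal singularity is weaker than $|x-y|^{-d/2}$, so a Birman--Solomyak-style Hilbert--Schmidt estimate places it in a smaller Schatten class $\mathcal{L}_p$ with $p<2$, which embeds continuously into $\mathcal{L}_{2,\infty}$ with norm controlled by $\|h\|_{L_M^{(2)}}$.

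I expect the main obstacle to be the quantitative control of the remainder in the Orlicz scale: showing that the improved kernel regularity of $R_i$ translates, uniformly in $h\in L_M^{(2)}$, into an $\mathcal{L}_p$ bound with the correct dependence on $\|h\|_{L_M^{(2)}}$. This will most likely be accomplished by interpolating an $\mathcal{L}_2$ Hilbert--Schmidt estimate on $R_i$, built directly from its kernel, against the weak Cwikel bound of \cite{SZ-solomyak}, in the spirit of Birman and Solomyak's original arguments. Once this comparison is in hand, the partition-of-unity reduction, the chart transfer, and the application of the Euclidean Cwikel estimate to the principal term are essentially bookkeeping.
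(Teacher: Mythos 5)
Your plan diverges from the paper's in a way that introduces a genuine gap at the key comparison step. You propose to write, on each chart after pushing the metric forward to $\mathbb{R}^d$,
$$(1-L_i)^{-d/4}=(1-\Delta_0)^{-d/4}+R_i,$$
with $R_i$ of order strictly less than $-d/2$, and then to dismiss $M_h R_i$ into a smaller Schatten ideal. This claim fails in general. The operator $L_i$ is elliptic with principal symbol $g^{ij}(x)\xi_i\xi_j$, so the principal symbol of $(1-L_i)^{-d/4}$ is $(g^{ij}(x)\xi_i\xi_j)^{-d/4}$, whereas that of $(1-\Delta_0)^{-d/4}$ is $|\xi|^{-d/2}$. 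Unless the coordinate patch is chosen so that $g^{ij}=\delta^{ij}$ on the whole chart (i.e.\ the metric is flat there), these symbols differ at order $-d/2$, so $R_i$ has the \emph{same} order as the leading term and cannot be absorbed by a lower-order/kernel-regularity argument. Normal coordinates only make $g^{ij}=\delta^{ij}$ at a point, not on an open set, so there is no way to arrange this on a finite atlas of a general compact Riemannian manifold. A resolvent-identity or Balakrishnan-formula comparison will record precisely this order-$-d/2$ discrepancy; it does not become subprincipal. There is also a secondary issue you elide: the chart push-forward of $(1-\Delta_g)^{-d/4}$ is not $(1-L_i)^{-d/4}$ but agrees with it only modulo a pseudolocal error, so the reduction to a comparison of two Bessel potentials on $\mathbb{R}^d$ already requires two-sided cutoffs and a separate estimate on the off-diagonal smoothing part.

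The paper sidesteps this entirely by never comparing fractional Laplacians. It reformulates the flat Cwikel--Solomyak estimate as a bound for the multiplication operator
$$M_f:\;W^{d/2,2}(\mathbb{T}^d)\to L_2(\mathbb{T}^d)$$
in $\mathcal{L}_{2,\infty}$ (Lemma~\ref{solomyak restated lemma}), then transfers this via charts to
$$M_f:\;W^{d/2,2}(X,\mathrm{vol}_g)\to L_2(X,\mathrm{vol}_g),$$
using that the Sobolev spaces on the manifold are \emph{defined} locally and hence behave well under coordinate change (Lemmas~\ref{l2 gamma lemma}--\ref{global sobolev lemma}). Only at the very end is the Laplace--Beltrami operator inserted, and the only fact used about it is that $(1-\Delta_g)^{-d/4}:L_2(X)\to W^{d/2,2}(X)$ is bounded (Lemma~\ref{laplace sobolev lemma}), which follows from G\aa rding's inequality, elliptic regularity, and complex interpolation. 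Because the Sobolev-norm formulation is an intrinsic object, no comparison of variable-coefficient and constant-coefficient Bessel potentials is ever needed, which is precisely what makes the localization argument close. If you want to keep your route, you would need either an extension of the $\mathrm{SZ}$-Solomyak estimate to pseudodifferential operators of order $-d/2$ with variable principal symbol, or a genuinely quantitative small-chart argument showing that the order-$-d/2$ error can be made arbitrarily small in $\mathcal{L}_{2,\infty}$; neither is in your outline.
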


%
%This formula naturally proposes a positive extension of Connes' integral using the weight
%$$\omega_{\varphi}:L_1^+(X,{\rm vol}_g)\to [0,\infty],$$
%$$\omega_{\varphi}:f\to \varphi((1-\Delta_g)^{-\frac{d}{4}}M_f(1-\Delta_g)^{-\frac{d}{4}}).$$
%The weight $\omega_{\varphi}$ is finite e.g. on $L_p^+(X,{\rm vol}_g),$ but not finite on $L_1^+(X,{\rm vol}_g)$ (see Lemma 5.7 in \cite{LPS}). Theorem \ref{cif manifold} below shows the validity of the Connes Integration Formula for a wide class of integrable functions. It extends the result of \cite{LSZ-last-kalton} to traces on $\mathcal{L}_{1,\infty}.$ {\color{red} and much more besides that...}

%
%
%Theorem \ref{cif manifold} significantly strengthens the main result of \cite{LSZ-last-kalton}. We enlarge the class of traces in the Connes Integration Formula above. Indeed, traces in \cite{LSZ-last-kalton} must be defined on a larger ideal $\mathcal{M}_{1,\infty}.$ One of the key ingredients of the proof is the characterization of commutator subspaces in operator ideals, the main result of \cite{DFWW}.

In Section \ref{cif manifold section}, we prove Theorem \ref{cif manifold} and compare it with the work of Birman and Solomyak \cite{BS70}.

For Euclidean space $\mathbb{R}^d,$ the following asymptotic formula was established by Birman and Solomyak \cite{BS70}. If $f\in L_p(\mathbb{R}^d),$ $p>2q,$ $q>1,$ is a positive compactly supported function, then Theorem 1 in \cite{BS70} (taken with  $\Phi=1$ and $\alpha=-\frac{d}{q'}$) yields
$$\lim_{n\to\infty}n^{\frac1q}\mu\Big(n,M_{f^{\frac12}}(-\Delta_{\mathbb{R}^d})^{-\frac{d}{2q}}M_{f^{\frac12}}\Big)=c_q\|f\|_q.$$
Should such a formula hold for $q=1,$ it would imply a version of Theorem \ref{cif manifold} for compactly supported functions on Euclidean spaces.

However, we are working on a compact manifold, not on Euclidean space and we do not have the Birman-Solomyak asymptotic formula for $p=1.$ For these reasons, our approach below is very different to that in \cite{BS70}.

\section{Preliminaries}

\subsection{Trace ideals}\label{ideal subsection}
The following material is standard; for more details we refer the reader to \cite{LSZ-book,Simon-book}.
Let $H$ be a complex separable infinite dimensional Hilbert space, and let $B(H)$ denote the set of all bounded operators on $H$, and let $K(H)$ denote the ideal of compact operators on $H.$ Given $T\in K(H),$ the sequence of singular values $\mu(T) = \{\mu(k,T)\}_{k=0}^\infty$ is defined as:
\begin{equation*}
\mu(k,T) = \inf\{\|T-R\|_{\infty}:\quad \mathrm{rank}(R) \leq k\}.
\end{equation*}
It is often convenient to identify the sequence $(\mu(k,T))_{k\geq0}$ with a step function $\sum_{k\geq0}\mu(k,T)\chi_{(k,k+1)}.$

Let $p \in (0,\infty).$ The weak Schatten class $\mathcal{L}_{p,\infty}$ is the set of operators $T$ such that $\mu(T)$ is in the weak $L_p$-space $l_{p,\infty}$, with the quasi-norm:
\begin{equation*}
	\|T\|_{p,\infty} = \sup_{k\geq 0} (k+1)^{\frac1p}\mu(k,T) < \infty.
\end{equation*}
Obviously, $\mathcal{L}_{p,\infty}$ is an ideal in $B(H).$ We also have the following form
of H\"older's inequality,
\begin{equation}\label{weak holder}
\|TS\|_{r,\infty} \leq c_{p,q}\|T\|_{p,\infty}\|S\|_{q,\infty}
\end{equation}
where $\frac{1}{r}=\frac{1}{p}+\frac{1}{q}$, for some constant $c_{p,q}$. Indeed, this follows from the definition of these quasi-norms and the inequality (see e.g. \cite[Proposition 1.6]{Fack1982}, \cite[Corollary 2.2]{GohbergKrein})
$$\mu(2n,TS)\leq \mu(n,T)\mu(n,S),\quad n\geq 0.$$

The closure of the set of all finite rank operators in $\mathcal{L}_{p,\infty}$ is called the separable part of $\mathcal{L}_{p,\infty}$ and is denoted by $(\mathcal{L}_{p,\infty})_0.$

The ideal of particular interest is $\mathcal{L}_{1,\infty}$, and we are concerned with traces on this ideal. For more details, see \cite[Section 5.7]{LSZ-book} and \cite{SSUZ2015}. A linear functional $\varphi:\mathcal{L}_{1,\infty}\to\mathbb{C}$ is called a trace if it is unitarily invariant. That is, for all unitary operators $U$ and for all $T\in\mathcal{L}_{1,\infty}$ we have that $\varphi(U^{\ast}TU) = \varphi(T)$. It follows that for all bounded operators $B$ we have $\varphi(BT)=\varphi(TB).$  

Every trace $\varphi:\mathcal{L}_{1,\infty}\to\mathbb{C}$ vanishes on the ideal of finite rank operators (such traces are called singular). In fact, $\varphi$ vanishes on the ideal $\mathcal{L}_1$ (see \cite{DFWW} or \cite{LSZ-book}). For the state of the art in the theory of singular traces and their applications in Non-commutative Geometry, we refer the reader to the survey \cite{LSZ-survey}.

\subsection{Sobolev spaces on compact manifolds}\label{sobolev manifold subsection}

In our definition of Sobolev space on compact manifolds, we follow \cite{Taylor}. 

Let $(X,g)$ be a compact $d$-dimensional Riemannian manifold and let ${\rm vol}_g$ be the Riemannian volume. If $u\in L_2(X,{\rm vol}_g),$ we say $u\in W^{s,2}(X,{\rm vol}_g)$ provided that, on each chart $U\subset X,$ every $\phi\in C_c^{\infty}(U),$ the element $\phi u$ belongs to $W^{s,2}(U)$ (if $U$ is identified with its image in $\mathbb{R}^d$). By the invariance under coordinate changes derived in Section 4.2 in \cite{Taylor}, it suffices to work with any single coordinate cover of $X.$ If $s=m,$ a nonnegative integer, then $W^{m,2}(X,{\rm vol}_g)$ is equal to the set of all $u\in L_2(X,{\rm vol}_g)$ such that, for any smooth vector fields $(X_l)_{l=1}^m,$ we have $X_1\cdots X_lu\in L_2(X,{\rm vol}_g).$

The following theorem is stated in Section 4.3 in \cite{Taylor}.
For the details of compex interpolation, we refer to the book \cite{KPS}.

\begin{thm}\label{sobolev manifold interpolation thm} For every $m\in\mathbb{Z}_+,$ for every $0<\theta<1,$ we have
	$$[L_2(X,{\rm vol}_g),W^{m,2}(X,{\rm vol}_g)]_{\theta}=W^{m\theta,2}(X,{\rm vol}_g).$$ 
\end{thm}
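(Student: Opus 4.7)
The plan is to reduce the interpolation identity on $X$ to its classical counterpart on $\mathbb{R}^d$ by means of a partition of unity, and then conclude using the retraction/coretraction principle of interpolation theory. First, I would fix a finite atlas $(U_i,\kappa_i)_{i=1}^N$ of $X$ with $\kappa_i\colon U_i\to V_i\subset\mathbb{R}^d,$ a smooth partition of unity $(\chi_i)_{i=1}^N$ subordinate to $(U_i),$ and cutoffs $\tilde\chi_i\in C_c^{\infty}(U_i)$ with $\tilde\chi_i\equiv 1$ on $\mathrm{supp}(\chi_i).$ For every $s\geq 0,$ I set
$$\Phi_s\colon W^{s,2}(X,{\rm vol}_g)\to\bigoplus_{i=1}^N W^{s,2}(\mathbb{R}^d),\quad \Phi_s(u)=((\chi_iu)\circ\kappa_i^{-1})_{i=1}^N,$$
$$\Psi_s\colon \bigoplus_{i=1}^N W^{s,2}(\mathbb{R}^d)\to W^{s,2}(X,{\rm vol}_g),\quad \Psi_s((v_i))=\sum_{i=1}^N\tilde\chi_i\cdot(v_i\circ\kappa_i),$$
extended by zero outside the appropriate sets. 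The coordinate invariance recalled in Section \ref{sobolev manifold subsection} (attributed to Section 4.2 of \cite{Taylor}) ensures that $\Phi_s$ and $\Psi_s$ are bounded for every $s\geq 0,$ and since $\sum_i\tilde\chi_i\chi_i=\sum_i\chi_i=1$ on $X,$ one has $\Psi_s\circ\Phi_s=\mathrm{id}_{W^{s,2}(X,{\rm vol}_g)}.$

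Next, I would invoke the classical Euclidean interpolation: the Bessel potential $u\mapsto(1-\Delta_{\mathbb{R}^d})^{s/2}u$ is an isometric isomorphism of $W^{s,2}(\mathbb{R}^d)$ onto $L_2(\mathbb{R}^d),$ and the analytic family $\{(1-\Delta_{\mathbb{R}^d})^{mz/2}\}_{0\leq\mathrm{Re}(z)\leq 1}$ combined with the three-lines theorem yields $[L_2(\mathbb{R}^d),W^{m,2}(\mathbb{R}^d)]_\theta=W^{m\theta,2}(\mathbb{R}^d).$ Since the complex method commutes with finite direct sums,
$$\Big[\bigoplus_{i=1}^N L_2(\mathbb{R}^d),\bigoplus_{i=1}^N W^{m,2}(\mathbb{R}^d)\Big]_\theta=\bigoplus_{i=1}^N W^{m\theta,2}(\mathbb{R}^d).$$

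The retraction/coretraction principle (see \cite{KPS}) applied to the pairs $(\Phi_0,\Psi_0)$ and $(\Phi_m,\Psi_m)$ then gives
$$[L_2(X,{\rm vol}_g),W^{m,2}(X,{\rm vol}_g)]_\theta=\Psi_{m\theta}\Big(\bigoplus_{i=1}^N W^{m\theta,2}(\mathbb{R}^d)\Big),$$
and since $\Phi_{m\theta}$ and $\Psi_{m\theta}$ are bounded with $\Psi_{m\theta}\circ\Phi_{m\theta}=\mathrm{id},$ this image coincides with $W^{m\theta,2}(X,{\rm vol}_g),$ which finishes the argument. The main obstacle is the boundedness of $\Phi_s,\Psi_s$ at fractional order $s=m\theta$: at non-integer $s$ the local definition of $W^{s,2}$ requires that the Bessel potential behaves well under multiplication by compactly supported smooth cutoffs and under smooth compactly supported diffeomorphisms, a statement that rests on the pseudodifferential calculus. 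This is precisely the chart-invariance fact black-boxed in the paper via the citation to Section 4.2 of \cite{Taylor}; once granted, the rest of the proof is purely formal.
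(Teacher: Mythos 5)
The paper itself gives no proof of this statement: it is simply asserted, with a citation to Section~4.3 of \cite{Taylor}. Your argument is correct and is the standard localization/retraction proof that one would expect a reference such as \cite{Taylor} to carry out: reduce to $\mathbb{R}^d$ by a partition of unity, settle the Euclidean case via Bessel potentials and interpolation of weighted $L_2$ spaces, and transfer back via the retraction/coretraction principle. The only genuinely analytic ingredient, as you note, is the boundedness of $\Phi_s$ and $\Psi_s$ at fractional order (equivalently, invariance of $W^{s,2}$ under smooth compactly supported diffeomorphisms and smooth cutoffs), and the paper black-boxes exactly this by invoking Section~4.2 of \cite{Taylor}, so you are consistent with its level of rigor. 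One small refinement: the retraction principle only requires $\Phi$ and $\Psi$ to be bounded at the integer endpoints $s=0$ and $s=m$, which is elementary by the chain rule; the fractional-order boundedness you flag as the "main obstacle" enters only at the very end, to identify $\Psi\bigl(\bigoplus_{i=1}^N W^{m\theta,2}(\mathbb{R}^d)\bigr)$ with $W^{m\theta,2}(X,{\rm vol}_g)$ as defined chartwise in Subsection~\ref{sobolev manifold subsection}. Your sketch handles this correctly, but keeping the formal interpolation step separate from that analytic input makes the logical structure a bit cleaner and avoids any appearance of circularity.
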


\section{Proof of Theorem \ref{solomyak cwikel estimate manifold}}\label{manifold section}

Having Theorem 1.1 in \cite{SZ-solomyak} at hands, we derive Theorem \ref{solomyak cwikel estimate manifold}. This part is based on and simultaneously improves upon the similar argument in \cite{LSZ-last-kalton}.

We begin with the simplest case of Theorem \ref{solomyak cwikel estimate manifold} when the manifold $X$ is given by $\mathbb{T}^d$ with the flat metric.

\begin{lem}\label{solomyak restated lemma} If $f\in L_{\infty}(\mathbb{T}^d),$ then
$$\|M_f\|_{\mathcal{L}_{2,\infty}(W^{\frac{d}{2},2}(\mathbb{T}^d)\to L_2(\mathbb{T}^d))}\leq c_d\|f\|_{L_M^{(2)}}.$$
\end{lem}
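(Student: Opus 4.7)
The plan is to recognize Lemma \ref{solomyak restated lemma} as an almost tautological reformulation of the extended Solomyak estimate, namely Theorem~1.1 in \cite{SZ-solomyak}, which upgrades \eqref{solomyak estimate} from even dimensions to arbitrary $d$ and asserts
$$\bigl\|M_f (1-\Delta_{\mathbb{T}^d})^{-\frac{d}{4}}\bigr\|_{2,\infty} \leq c_d\|f\|_{L_M^{(2)}}.$$

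The reduction proceeds by aligning the Hilbert space structures. I would equip $W^{\frac{d}{2},2}(\mathbb{T}^d)$ with the natural inner product
$$\langle u,v\rangle_{W^{d/2,2}} := \bigl\langle (1-\Delta_{\mathbb{T}^d})^{\frac{d}{4}}u,\, (1-\Delta_{\mathbb{T}^d})^{\frac{d}{4}}v\bigr\rangle_{L_2(\mathbb{T}^d)},$$
which on the torus is unambiguously defined via Fourier series and yields a norm equivalent to any of the usual ones described in Subsection \ref{sobolev manifold subsection}. With this choice, the operator $(1-\Delta_{\mathbb{T}^d})^{\frac{d}{4}}$ is a unitary isomorphism of Hilbert spaces from $W^{\frac{d}{2},2}(\mathbb{T}^d)$ onto $L_2(\mathbb{T}^d)$.

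Next, I would factor the multiplication operator as
$$M_f = \bigl(M_f(1-\Delta_{\mathbb{T}^d})^{-\frac{d}{4}}\bigr)\circ (1-\Delta_{\mathbb{T}^d})^{\frac{d}{4}}.$$
Since composition with a unitary preserves singular values, we obtain
$$\mu\bigl(k,\, M_f: W^{\frac{d}{2},2}(\mathbb{T}^d) \to L_2(\mathbb{T}^d)\bigr) = \mu\bigl(k,\, M_f(1-\Delta_{\mathbb{T}^d})^{-\frac{d}{4}}: L_2(\mathbb{T}^d) \to L_2(\mathbb{T}^d)\bigr)$$
for every $k\geq 0$. The $\mathcal{L}_{2,\infty}$ quasi-norms therefore agree, and the bound is immediate from Theorem~1.1 of \cite{SZ-solomyak}.

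There is no substantive obstacle. The only point meriting care is to pin down the Hilbert space structure on $W^{\frac{d}{2},2}(\mathbb{T}^d)$ in advance, so that the two formulations are literally identical rather than merely equivalent up to a constant that would otherwise need to be absorbed into $c_d$.
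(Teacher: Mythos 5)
Your argument is correct and follows the same route as the paper: factor $M_f$ through the isometry $(1-\Delta_{\mathbb{T}^d})^{\frac{d}{4}}\colon W^{\frac{d}{2},2}(\mathbb{T}^d)\to L_2(\mathbb{T}^d)$, then invoke Theorem~1.1 of the cited Sukochev--Zanin preprint. The paper inserts one further trivial step, rewriting $\|M_f(1-\Delta)^{-\frac{d}{4}}\|_{2,\infty}^2$ as $\|(1-\Delta)^{-\frac{d}{4}}M_{|f|^2}(1-\Delta)^{-\frac{d}{4}}\|_{1,\infty}$ before applying that theorem (evidently stated there in the symmetric $\mathcal{L}_{1,\infty}$ form), but this is inconsequential.
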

\begin{proof} We view the operator
$$(M_f)_{W^{\frac{d}{2},2}(\mathbb{T}^d)\to L_2(\mathbb{T}^d)}$$
as a combination
$$\Big(M_f(1-\Delta)^{-\frac{d}{4}}\Big)_{L_2(\mathbb{T}^d)\to L_2(\mathbb{T}^d)}\circ (1-\Delta)^{\frac{d}{4}}_{W^{\frac{d}{2},2}(\mathbb{T}^d)\to L_2(\mathbb{T}^d)}.$$
By definition, the operator
$$(1-\Delta)^{\frac{d}{4}}_{W^{\frac{d}{2},2}(\mathbb{T}^d)\to L_2(\mathbb{T}^d)}$$
is an isometry between Hilbert spaces, and therefore
$$\|M_f\|_{\mathcal{L}_{2,\infty}(W^{\frac{d}{2},2}(\mathbb{T}^d)\to L_2(\mathbb{T}^d))}=\Big\|M_f(1-\Delta)^{-\frac{d}{4}}\Big\|_{\mathcal{L}_{2,\infty}(L_2(\mathbb{T}^d)\to L_2(\mathbb{T}^d))}.$$
Obviously,
$$\Big\|M_f(1-\Delta)^{-\frac{d}{4}}\Big\|_{2,\infty}^2=\Big\|\Big|M_f(1-\Delta)^{-\frac{d}{4}}\Big|^2\Big\|_{1,\infty}=\Big\|(1-\Delta)^{-\frac{d}{4}}M_{|f|^2}(1-\Delta)^{-\frac{d}{4}}\Big\|_{1,\infty}.$$
Appealing now to Theorem 1.1 in \cite{SZ-solomyak}, we obtain
$$\Big\|M_f(1-\Delta)^{-\frac{d}{4}}\Big\|_{2,\infty}^2\leq c_d\big\||f|^2\big\|_{L_M}=c_d\|f\|_{L_M^{(2)}}^2.$$
Combining three preceding displays, we complete the proof.
\end{proof}

Since $X$ is a compact manifold, we may assume without loss of generality that our atlas consists of finitely many charts. Furthermore, we assume that, for each chart $(U,\gamma)$ in our atlas, the set $\gamma(U)$ is bounded. Thus, $\gamma(U)$ is compactly supported in a sufficiently large open box $(-N,N)^d.$ By applying a dilation if necessary, we may assume without loss of generality that $\gamma(U)$ is compactly supported in $(-\pi,\pi)^d.$ By identifying the edges of $(-\pi,\pi)^d,$ we may view $\gamma$ as a continuous function $\gamma:U\to\mathbb{T}^d.$

Further, we assume that, in every chart $(U,\gamma)$ from our atlas, the metric tensor is bounded from above and from below. Hence, the measure ${\rm vol}_g\circ\gamma^{-1}$ is equivalent to the Haar measure in the following sense: those measures are mutually absolutely continuous and Radon-Nikodym derivatives are bounded. These considerations yield the following two lemmas.

\begin{lem}\label{l2 gamma lemma} Let $(U,\gamma)$ be a chart.
	\begin{enumerate}[{\rm (i)}]
		\item A linear mapping $U_{\gamma}:L_2(X,{\rm vol}_g)\to L_2(\mathbb{T}^d)$ defined by the formula
		$$U_{\gamma}\xi=\chi_{\gamma(U)}\cdot (\xi\circ\gamma^{-1}),\quad \xi\in L_2(X,g),$$
		is bounded;
		\item A linear mapping $V_{\gamma}:L_2(\mathbb{T}^d)\to L_2(X,{\rm vol}_g)$ defined by the formula
		$$V_{\gamma}\xi=\chi_U\cdot(\xi\circ\gamma),\quad \xi\in L_2(\mathbb{T}^d),$$
		is bounded;
	\end{enumerate}	
\end{lem}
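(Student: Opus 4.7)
The plan is to prove both (i) and (ii) by a direct change-of-variables computation combined with the hypothesis recalled immediately before the lemma: in each chart $(U,\gamma)$, the metric tensor $g$ is bounded above and below. Writing $\tilde g := \gamma_\ast g$ for the pushforward metric on $\gamma(U)$, this hypothesis forces the Jacobian density $\sqrt{\det\tilde g}$ to be bounded above and below on $\gamma(U)$ by positive constants $c,C$.

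First I would set up notation: let $\lambda$ denote Haar measure on $\mathbb{T}^d$. The change-of-variables formula for the Riemannian volume gives
\[\gamma_\ast\big({\rm vol}_g|_U\big)\,=\,\sqrt{\det\tilde g}\cdot\lambda|_{\gamma(U)},\]
so by the two-sided bound on $\sqrt{\det\tilde g}$, the measures $\gamma_\ast({\rm vol}_g|_U)$ and $\lambda|_{\gamma(U)}$ are mutually absolutely continuous with bounded Radon--Nikodym derivatives in both directions. This is exactly the point the authors emphasise in the paragraph preceding the lemma.

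For (i), unwinding the definition gives
\[\|U_\gamma\xi\|_{L_2(\mathbb{T}^d)}^2=\int_{\gamma(U)}|\xi\circ\gamma^{-1}|^2\,d\lambda,\]
and I would use the lower bound $\sqrt{\det\tilde g}\ge c$ to dominate $d\lambda|_{\gamma(U)}$ by $c^{-1}d(\gamma_\ast{\rm vol}_g)$, then pull back along $\gamma$ to get
\[\|U_\gamma\xi\|_{L_2(\mathbb{T}^d)}^2\,\le\,c^{-1}\int_U|\xi|^2\,d{\rm vol}_g\,\le\,c^{-1}\|\xi\|_{L_2(X,{\rm vol}_g)}^2.\]
For (ii) the same computation runs in the opposite direction: $\|V_\gamma\xi\|_{L_2(X,{\rm vol}_g)}^2=\int_{\gamma(U)}|\xi|^2\sqrt{\det\tilde g}\,d\lambda\le C\|\xi\|_{L_2(\mathbb{T}^d)}^2$ by the upper bound on $\sqrt{\det\tilde g}$.

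There is no substantive obstacle here: the entire content of the lemma is the bounded equivalence of the pushforward Riemannian measure and Haar measure on the chart, which is an immediate consequence of the two-sided bound on the metric tensor already recorded by the authors. The only care needed is to correctly identify $\gamma(U)\subset\mathbb{T}^d$ after the dilation and edge-identification described in the preceding paragraphs, so that $\lambda|_{\gamma(U)}$ really does agree with the restriction of the flat Lebesgue measure pulled back from $(-\pi,\pi)^d$.
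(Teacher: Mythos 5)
Your proof is correct and follows exactly the line of reasoning the paper intends: the paper gives no separate proof of this lemma, stating only that the two-sided boundedness of the metric tensor (hence the bounded equivalence of $\gamma_\ast({\rm vol}_g|_U)$ and Haar measure) "yield the following two lemmas." Your change-of-variables computation is precisely that argument spelled out.
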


\begin{lem}\label{e gamma lemma} Let $(U,\gamma)$ be a chart and and let $U_{\gamma}$ be as in Lemma \ref{l2 gamma lemma}. For every $f\in L_{\infty}(X,g),$  we have
	$$\|U_{\gamma}f\|_{L_M^{(2)}(\mathbb{T}^d)}\leq C_{U,\gamma}\|f\|_{L_M^{(2)}(X,{\rm vol}_g)}.$$ 
\end{lem}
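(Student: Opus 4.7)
The plan is to reduce the inequality to a change-of-variables between the pushforward measure $\gamma_*({\rm vol}_g|_U)$ on $\gamma(U)\subset\mathbb{T}^d$ and the Haar measure, using only the measure equivalence announced in the paragraph preceding Lemma \ref{l2 gamma lemma}.

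First, I would recall that $L_M^{(2)}$ is the $2$-convexification of $L_M$ (with $M(t)=t\log(e+t)$), whose Luxemburg norm has the form
$$\|h\|_{L_M^{(2)}(Y,\nu)}=\inf\Bigl\{\lambda>0:\int_Y M(|h|^2/\lambda^2)\,d\nu\leq 1\Bigr\}.$$
Next, the standing assumption on the chart $(U,\gamma)$ provides a constant $c=c_{U,\gamma}\geq 1$ such that the Radon-Nikodym derivative $d(\gamma_*{\rm vol}_g)/dm$ on $\gamma(U)$ is bounded above by $c$. A direct change-of-variables, followed by extending the integral from $\gamma(U)$ back to $\mathbb{T}^d$ (using that $U_\gamma f$ is supported in $\gamma(U)$) and from $U$ to $X$, then yields the pointwise bound
$$\int_{\mathbb{T}^d} M(|U_\gamma f|^2/\lambda^2)\,dm \leq c\int_X M(|f|^2/\lambda^2)\,d{\rm vol}_g$$
valid for every $\lambda>0$.

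The final step is to absorb the constant $c$ by rescaling $\lambda$. Since $M(t)=t\log(e+t)$ satisfies $M(\alpha t)\leq \alpha M(t)$ for $\alpha\in(0,1]$ (a routine consequence of the monotonicity of $\log$), choosing $\lambda=c^{1/2}\,\|f\|_{L_M^{(2)}(X,{\rm vol}_g)}$ forces the right-hand side of the display above to be at most $1$, and the definition of the Luxemburg norm on $\mathbb{T}^d$ then gives $\|U_\gamma f\|_{L_M^{(2)}(\mathbb{T}^d)}\leq c^{1/2}\|f\|_{L_M^{(2)}(X,{\rm vol}_g)}$, which is the required inequality with $C_{U,\gamma}=c_{U,\gamma}^{1/2}$.

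There is no genuine obstacle. The lemma is essentially an instance of the general principle that Orlicz norms transform nicely under changes of measure with bounded Radon-Nikodym derivatives; the only ingredient beyond unpacking the definitions is the sub-homogeneity $M(\alpha t)\leq\alpha M(t)$, which handles the passage from the measure-theoretic constant $c$ to a constant in the norm inequality.
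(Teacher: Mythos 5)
Your proof is correct and matches the paper's intent: the paper offers no written proof for this lemma, simply asserting that it follows from the observation that ${\rm vol}_g\circ\gamma^{-1}$ and Haar measure on $\gamma(U)$ have mutually bounded Radon--Nikodym derivatives, and your change-of-variables plus the sub-homogeneity $M(\alpha t)\le\alpha M(t)$ for $\alpha\in(0,1]$ (which indeed holds since $\log(e+\alpha t)\le\log(e+t)$) is exactly the way to make that rigorous. One small slip: for the displayed integral inequality you need $dm/d(\gamma_*{\rm vol}_g)\le c$ on $\gamma(U)$, not $d(\gamma_*{\rm vol}_g)/dm\le c$ as you wrote; since the paper assumes both Radon--Nikodym derivatives are bounded this is only a mislabeling, not a gap, but the direction should be fixed.
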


Our next lemma is a variant of Lemma \ref{l2 gamma lemma} for Sobolev spaces.

\begin{lem}\label{sobolev gamma lemma} Let $(U,\gamma)$ be a chart and let $U_{\gamma}$ be as in Lemma \ref{l2 gamma lemma}. If $\phi\in C^{\infty}_c(U),$ then
	$$U_{\gamma}M_{\phi}:W^{\frac{d}{2},2}(X,{\rm vol}_g)\to W^{\frac{d}{2},2}(\mathbb{T}^d)$$
	is everywhere defined bounded mapping.	
\end{lem}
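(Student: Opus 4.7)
The plan is to establish boundedness at two integer Sobolev exponents and then use Theorem \ref{sobolev manifold interpolation thm} to fill in the target exponent $d/2$.

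First, I would handle the trivial endpoint $s=0$. Since $\phi\in C^\infty_c(U)\subset L_\infty(X,{\rm vol}_g)$, the multiplication operator $M_\phi$ is bounded on $L_2(X,{\rm vol}_g)$, and composing with the bounded map $U_\gamma$ from Lemma \ref{l2 gamma lemma}(i) yields boundedness of $U_\gamma M_\phi\colon L_2(X,{\rm vol}_g)\to L_2(\mathbb{T}^d)$.

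Next, I would fix any integer $m>d/2$ and show that $U_\gamma M_\phi$ is bounded from $W^{m,2}(X,{\rm vol}_g)$ to $W^{m,2}(\mathbb{T}^d)$. For $u\in W^{m,2}(X,{\rm vol}_g)$, multiplication by $\phi$ preserves $W^{m,2}$ regularity by Leibniz applied to the vector-field characterization recalled in Subsection \ref{sobolev manifold subsection}, since all derivatives of $\phi$ are bounded. The function $\phi u$ is supported in a compact set $K\subset U$. Under the chart $\gamma$, the pullback $(\phi u)\circ\gamma^{-1}$ lies in $W^{m,2}(\gamma(U))$ with support compactly contained in $(-\pi,\pi)^d$, and its Euclidean Sobolev norm is controlled by the intrinsic $W^{m,2}$ norm of $\phi u$ because the metric tensor is bounded above and below on $U$ and $\gamma$ has uniformly bounded derivatives of every order on $K$. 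Finally, a $W^{m,2}$ function compactly supported in $(-\pi,\pi)^d$ extends by zero to a $W^{m,2}$ function on $\mathbb{T}^d$ with equivalent norm (and with an additional factor of $\chi_{\gamma(U)}$ inserted as in the definition of $U_\gamma$, which has no effect in view of the support).

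With both endpoint bounds in hand, I would apply Theorem \ref{sobolev manifold interpolation thm} twice — once on $(X,g)$ and once on the flat torus $(\mathbb{T}^d,{\rm flat})$ — with $\theta=d/(2m)\in(0,1)$, to identify $W^{d/2,2}$ as the interpolation space on each side. Interpolation of bounded linear operators then delivers the desired bound $U_\gamma M_\phi\colon W^{d/2,2}(X,{\rm vol}_g)\to W^{d/2,2}(\mathbb{T}^d)$. The main obstacle I expect is the norm equivalence between the intrinsic $W^{m,2}$ norm on $U$, defined via smooth vector fields on $X$ that become first-order operators with smooth coefficients in the chart, and the Euclidean $W^{m,2}$ norm of the pullback on $\gamma(U)$. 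This rests on the chain rule combined with the uniform bounds on the metric tensor and on the Jacobian of $\gamma^{\pm1}$ on the compact set where $\phi u$ lives; it is essentially standard, but it is the only place where the geometric hypotheses actually enter the argument.
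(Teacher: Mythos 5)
Your proof is correct, but it takes a genuinely different and more laborious route than the paper's. The paper disposes of the lemma in one sentence: by the definition of $W^{s,2}(X,{\rm vol}_g)$ recalled in Subsection \ref{sobolev manifold subsection}, $u\in W^{s,2}(X,{\rm vol}_g)$ means precisely that $\phi u$, identified with its pushforward under the chart map, lies in Euclidean $W^{s,2}$, so the mapping $U_\gamma M_\phi$ and its boundedness are already baked into the definition (with the chart invariance from Taylor guaranteeing that the choice of chart is irrelevant). You instead prove quantitative boundedness at the two integer endpoints $s=0$ and $s=m>d/2$ by elementary chart calculus (Leibniz plus boundedness of the metric tensor and chart derivatives on ${\rm supp}(\phi)$), and then reach the fractional exponent $d/2$ by applying Theorem \ref{sobolev manifold interpolation thm} on both $X$ and $\mathbb{T}^d$. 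What you buy is self-containedness: you never have to argue that the qualitative membership statement in the definition upgrades to a norm bound for non-integer $s$ (which, strictly speaking, invokes either the closed graph theorem or a local-patch definition of the fractional norm). What the paper buys is economy: once one accepts that the $W^{s,2}(X)$ norm is defined through chart localisations, there is essentially nothing to prove. Both are valid; yours is the safer route for a reader who wants to see the estimate rather than trust the phrase ``by definition.''
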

\begin{proof} This is, essentially, a definition of Sobolev space $W^{\frac{d}{2},2}(X,{\rm vol}_g)$ (see Subsection \ref{sobolev manifold subsection} above).
\end{proof}

\begin{lem}\label{local sobolev lemma} Let $(U,\gamma)$ be a chart and let $K\subset U$ be compact. For every $f\in L_{\infty}(X,{\rm vol}_g)$ supported in $K,$ we have
	$$\Big\|M_f\Big\|_{\mathcal{L}_{2,\infty}(W^{\frac{d}{2},2}(X,{\rm vol}_g)\to L_2(X,{\rm vol}_g))}\leq C_{K,U,\gamma,X,g}\|f\|_{L_M^{(2)}(X,{\rm vol}_g)}.$$
\end{lem}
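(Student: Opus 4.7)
The plan is to localize to the chart $(U,\gamma)$ and transport the problem to the torus, where Lemma \ref{solomyak restated lemma} already delivers the required bound. Fix a cutoff $\phi \in C_c^\infty(U)$ with $\phi \equiv 1$ on some open neighborhood of $K$. Because $\mathrm{supp}(f) \subset K$, we have $M_f = M_f M_\phi$. Define the push-forward
$$\tilde f := \chi_{\gamma(U)} \cdot (f \circ \gamma^{-1}) \in L_\infty(\mathbb{T}^d),$$
which is supported in the compact set $\gamma(K) \subset \mathbb{T}^d$. A direct calculation from the definitions in Lemma \ref{l2 gamma lemma} yields the operator identity
$$M_f = V_\gamma \, M_{\tilde f} \, U_\gamma M_\phi$$
as a map $W^{\frac{d}{2},2}(X,\mathrm{vol}_g) \to L_2(X,\mathrm{vol}_g)$. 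Indeed, for $\xi$ in the source, $U_\gamma(\phi\xi) = \chi_{\gamma(U)}\cdot(\phi\xi)\circ\gamma^{-1}$; multiplying by $\tilde f$ produces $\chi_{\gamma(U)}\cdot(f\phi\xi)\circ\gamma^{-1}$; and applying $V_\gamma$ returns $\chi_U \cdot f\phi\xi = f\xi$, using $\phi \equiv 1$ on $\mathrm{supp}(f)$ and $\mathrm{supp}(f) \subset U$.

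With this factorization in hand, the estimate is immediate from the ideal property of $\mathcal{L}_{2,\infty}$ (which persists for operators between distinct Hilbert spaces). Lemma \ref{sobolev gamma lemma} gives boundedness of $U_\gamma M_\phi: W^{\frac{d}{2},2}(X,\mathrm{vol}_g) \to W^{\frac{d}{2},2}(\mathbb{T}^d)$; Lemma \ref{solomyak restated lemma} gives $M_{\tilde f} \in \mathcal{L}_{2,\infty}(W^{\frac{d}{2},2}(\mathbb{T}^d) \to L_2(\mathbb{T}^d))$ with norm at most $c_d\|\tilde f\|_{L_M^{(2)}(\mathbb{T}^d)}$; and Lemma \ref{l2 gamma lemma}(ii) gives boundedness of $V_\gamma: L_2(\mathbb{T}^d) \to L_2(X,\mathrm{vol}_g)$. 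Composing,
$$\|M_f\|_{\mathcal{L}_{2,\infty}(W^{\frac{d}{2},2}(X,\mathrm{vol}_g) \to L_2(X,\mathrm{vol}_g))} \leq \|V_\gamma\|\cdot c_d\|\tilde f\|_{L_M^{(2)}(\mathbb{T}^d)}\cdot \|U_\gamma M_\phi\|.$$
Finally, Lemma \ref{e gamma lemma} bounds $\|\tilde f\|_{L_M^{(2)}(\mathbb{T}^d)}$ by $C_{U,\gamma}\|f\|_{L_M^{(2)}(X,\mathrm{vol}_g)}$. Since $\phi$ is fixed once $K$ and $U$ are fixed, all remaining operator norms are absorbed into a single constant $C_{K,U,\gamma,X,g}$.

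Since this lemma is bookkeeping that combines ingredients already built (the torus estimate, the three chart-transport lemmas, and the quasi-normed ideal property), I do not expect a substantive obstacle. The only point requiring genuine care is the verification that $V_\gamma M_{\tilde f} U_\gamma M_\phi$ really coincides with $M_f$; this relies critically on $\phi \equiv 1$ on $\mathrm{supp}(f)$ and on $\mathrm{supp}(f)\subset U$, so that the auxiliary indicators $\chi_U$ and $\chi_{\gamma(U)}$ inserted by $V_\gamma$ and $U_\gamma$ restrict to the identity on the relevant support.
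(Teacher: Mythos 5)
Your proof is correct and takes essentially the same approach as the paper: you produce exactly the factorization $M_f = V_\gamma \, M_{U_\gamma f}\, U_\gamma M_\phi$ (your $\tilde f$ is precisely $U_\gamma f$) and then combine Lemmas \ref{solomyak restated lemma}, \ref{l2 gamma lemma}, \ref{e gamma lemma}, and \ref{sobolev gamma lemma} via the ideal property. The only cosmetic difference is that the paper derives the factorization algebraically from $V_\gamma U_\gamma = M_{\chi_U}$ and the identity $U_\gamma M_f V_\gamma = M_{U_\gamma f}$, whereas you verify it by a direct pointwise computation.
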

\begin{proof} Let $U_{\gamma}$ and $V_{\gamma}$ be as in Lemma \ref{l2 gamma lemma}. Note that
	$$V_{\gamma}U_{\gamma}=M_{\chi_U}.$$
	
	Choose $\phi\in C^{\infty}_c(U)$ such that $\phi=1$ on $K.$ We write
	$$M_f=M_fM_{\phi},\quad M_f=M_fV_{\gamma}U_{\gamma},\quad M_f=V_{\gamma}U_{\gamma}M_f.$$	
	Thus,
	$$M_f=V_{\gamma}U_{\gamma}M_fV_{\gamma}U_{\gamma}M_{\phi}=V_{\gamma}\cdot U_{\gamma}M_fV_{\gamma}\cdot U_{\gamma}M_{\phi}=V_{\gamma}\cdot M_{U_{\gamma}f}\cdot U_{\gamma}M_{\phi}.$$
	Hence,
	$$
	\Big\|M_f\Big\|_{\mathcal{L}_{2,\infty}(W^{\frac{d}{2},2}(X,{\rm vol}_g)\to L_2(X,{\rm vol}_g))}\leq ABC,
	$$
	where
	$$
	A=\|V_{\gamma}\|_{L_2(\mathbb{T}^d)\to L_2(X,{\rm vol}_g)},\ B=\|M_{U_{\gamma}f}\|_{\mathcal{L}_{2,\infty}(W^{\frac{d}{2},2}(\mathbb{T}^d)\to L_2(\mathbb{T}^d))},\ {\rm and}
	$$
	$$ C=  \|U_{\gamma}M_{\phi}\|_{W^{\frac{d}{2},2}(X,{\rm vol}_g)\to W^{\frac{d}{2},2}(\mathbb{T}^d)}.
	$$
	The first factor $A$ is finite by Lemma \ref{l2 gamma lemma} (it depends on $X,$ $U$ and $\gamma$). The third factor $C$ is finite by Lemma \ref{sobolev gamma lemma} (it depends not only on $X,$ $U$ and $\gamma,$ but also on $\phi$ and, hence, on $K$). It follows from Lemma \ref{solomyak restated lemma} and Lemma \ref{e gamma lemma} that
	$$B=\|M_{U_{\gamma}f}\|_{\mathcal{L}_{2,\infty}(W^{\frac{d}{2},2}(\mathbb{T}^d)\to L_2(\mathbb{T}^d))}\leq c_d\|U_{\gamma}f\|_{L_M^{(2)}(\mathbb{T}^d)}\leq c_dC_{U,\gamma}\|f\|_{L_M^{(2)}(X,{\rm vol}_g)}.$$
	Combining these estimates, we complete the proof.
\end{proof}

The next lemma extends the result of Lemma \ref{local sobolev lemma} by removing the assumption that $f$ is supported in a chart.

\begin{lem}\label{global sobolev lemma} If $f\in L_{\infty}(X,g),$ then 
	$$\|M_f\|_{\mathcal{L}_{2,\infty}(W^{\frac{d}{2},2}(X,{\rm vol}_g)\to L_2(X,{\rm vol}_g))}\leq C_{X,g}\|f\|_{L_M^{(2)}(X,{\rm vol}_g)}.$$
\end{lem}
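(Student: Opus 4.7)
The plan is to reduce the global statement to the local one (Lemma \ref{local sobolev lemma}) via a finite smooth partition of unity subordinate to the atlas. By compactness of $X$, we may fix a finite atlas $\{(U_i,\gamma_i)\}_{i=1}^n$ (as already arranged in the discussion preceding Lemma \ref{l2 gamma lemma}) and choose smooth functions $\phi_i \in C^\infty_c(U_i)$ with $\sum_{i=1}^n \phi_i = 1$ on $X$. Setting $K_i := \operatorname{supp}(\phi_i) \subset U_i$, the function $\phi_i f$ is supported in the compact set $K_i$, so Lemma \ref{local sobolev lemma} delivers
$$\bigl\|M_{\phi_i f}\bigr\|_{\mathcal{L}_{2,\infty}(W^{d/2,2}(X,\mathrm{vol}_g)\to L_2(X,\mathrm{vol}_g))} \leq C_{K_i,U_i,\gamma_i,X,g}\,\|\phi_i f\|_{L_M^{(2)}(X,\mathrm{vol}_g)}$$
for each $i$.

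Next I would control each $\|\phi_i f\|_{L_M^{(2)}}$ by $\|f\|_{L_M^{(2)}}$. Since $L_M^{(2)}$ is a symmetric Banach function space and $\phi_i \in L_\infty$, multiplication by $\phi_i$ is bounded on $L_M^{(2)}$ with norm at most $\|\phi_i\|_\infty \leq 1$; this gives
$$\|\phi_i f\|_{L_M^{(2)}(X,\mathrm{vol}_g)} \leq \|f\|_{L_M^{(2)}(X,\mathrm{vol}_g)}.$$

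Finally, writing $M_f = \sum_{i=1}^n M_{\phi_i f}$ and applying the quasi-triangle inequality for the quasi-norm $\|\cdot\|_{2,\infty}$ iteratively (the number of terms $n$ is fixed, depending only on $(X,g)$), one obtains
$$\|M_f\|_{\mathcal{L}_{2,\infty}(W^{d/2,2}\to L_2)} \leq D_n \sum_{i=1}^n \|M_{\phi_i f}\|_{\mathcal{L}_{2,\infty}(W^{d/2,2}\to L_2)} \leq C_{X,g}\,\|f\|_{L_M^{(2)}(X,\mathrm{vol}_g)},$$
where $C_{X,g} := D_n \sum_{i=1}^n C_{K_i,U_i,\gamma_i,X,g}$ depends only on the chosen atlas and partition of unity, hence ultimately only on $(X,g)$.

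The proof is essentially bookkeeping; the only mildly delicate point is that $\|\cdot\|_{2,\infty}$ is a quasi-norm rather than a norm, so the triangle inequality produces an extra constant upon summing $n$ terms. This is harmless since $n$ is determined once and for all by the fixed atlas. All the hard analytic work has already been done in Lemma \ref{solomyak restated lemma} (via Theorem~1.1 of \cite{SZ-solomyak}) and in the local-to-chart transfer carried out in Lemma \ref{local sobolev lemma}.
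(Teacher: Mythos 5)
Your proof is correct and follows essentially the same route as the paper: fix a finite partition of unity subordinate to the atlas, decompose $M_f=\sum_i M_{\phi_i f}$, apply Lemma~\ref{local sobolev lemma} to each piece, bound $\|\phi_i f\|_{L_M^{(2)}}$ by $\|\phi_i\|_\infty\|f\|_{L_M^{(2)}}$, and sum using the quasi-triangle inequality in $\mathcal{L}_{2,\infty}$. The only cosmetic difference is that the paper keeps the factor $\|\phi_k\|_\infty$ explicit rather than invoking $\|\phi_i\|_\infty\le 1$, which changes nothing.
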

\begin{proof} Let $\{\phi_k\}_{k=1}^N$ be a partition of unity subordinate to an atlas of bounded charts $(U_k,\gamma_k).$ We write
	$$M_f=\sum_{i=k}^N M_{f\phi_k}.$$
	Recall the triangle inequality in $\mathcal{L}_{2,\infty}$ (it can be found e.g. in \cite{LSZ-book}):
	$$\|\sum_{k=1}^nA_k\|_{2,\infty}\leq 2\sum_{k=1}^n\|A_k\|_{2,\infty}.$$
	Thus,
	$$\|M_f\|_{\mathcal{L}_{2,\infty}(W^{\frac{d}{2},2}(X,{\rm vol}_g)\to L_2(X,{\rm vol}_g))}\leq 2\sum_{k=1}^N\|M_{f\phi_k}\|_{\mathcal{L}_{2,\infty}(W^{\frac{d}{2},2}(X,{\rm vol}_g)\to L_2(X,{\rm vol}_g))}.$$
	By Lemma \ref{local sobolev lemma}, we have
	$$\|M_{f\phi_k}\|_{\mathcal{L}_{2,\infty}(W^{\frac{d}{2},2}(X,{\rm vol}_g)\to L_2(X,{\rm vol}_g))}\leq C_{{\rm supp}(\phi_k),U_k,\gamma_k,X,g}\|f\phi_k\|_{L_M^{(2)}(X,{\rm vol}_g)}.$$
	Thus,
	\begin{align*}
	\|M_f\|&_{\mathcal{L}_{2,\infty}(W^{\frac{d}{2},2}(X,{\rm vol}_g)\to L_2(X,{\rm vol}_g))}\leq 2\sum_{k=1}^NC_{{\rm supp}(\phi_k),U_k,\gamma_k,X,g}\|f\phi_k\|_{L_M^{(2)}(X,{\rm vol}_g)}\\& \leq
	2\Big(\sum_{k=1}^NC_{E,{\rm supp}(\phi_k),U_k,\gamma_k,X,g}\|\phi_k\|_{\infty}\Big)\|f\|_{L_M^{(2)}(X,{\rm vol}_g)}.
	\end{align*}
	
\end{proof}

Not every author defines Sobolev space on compact manifolds as in \cite{Taylor}. An equally important definition is via powers of Laplace-Beltrami operator $\Delta_g.$ Those definitions are known to be equivalent. We only need one side of this equivalence as established in the following lemma.

\begin{lem}\label{laplace sobolev lemma} For every $s>0,$ the operator $(1-\Delta_g)^{-\frac{s}{2}}$ is a well defined and bounded mapping from $L_2(X,{\rm vol}_g)$ to $W^{s,2}(X,{\rm vol}_g).$
\end{lem}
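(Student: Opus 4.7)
The plan is to deduce the claim for general $s>0$ from two ingredients: an elliptic regularity statement at even integer orders, and complex interpolation via Theorem~\ref{sobolev manifold interpolation thm} to fill in the fractional orders. Well-definedness of $(1-\Delta_g)^{-s/2}$ as a contraction on $L_2(X,{\rm vol}_g)$ is immediate from the spectral theorem, since $-\Delta_g$ is a non-negative self-adjoint operator with discrete spectrum; the real content is the mapping property into $W^{s,2}$.

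The first step is the even integer case: for each $k\in\mathbb{Z}_+$, the operator $(1-\Delta_g)^{-k}$ maps $L_2(X,{\rm vol}_g)$ boundedly into $W^{2k,2}(X,{\rm vol}_g)$. Since $1-\Delta_g$ is a second order elliptic differential operator with smooth coefficients on the compact manifold $X$, the standard elliptic a priori estimate in each coordinate chart combined with a partition of unity gives that $(1-\Delta_g)^{-1}:W^{r,2}(X,{\rm vol}_g)\to W^{r+2,2}(X,{\rm vol}_g)$ is bounded for every integer $r\geq 0$ (cf. Chapter~5 of \cite{Taylor}); iterating $k$ times yields the claim.

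The second step handles general $s>0$ by complex interpolation. I would fix $k\in\mathbb{Z}_+$ with $2k>s$, and consider the operator-valued analytic family $T_z:=(1-\Delta_g)^{-kz}$ on the strip $\{0\leq {\rm Re}(z)\leq 1\}$. Because $1-\Delta_g\geq 1$ is self adjoint and positive, $T_{iy}$ is unitary on $L_2$ for every $y\in\mathbb{R}$, while Step~1 gives that $T_{1+iy}=(1-\Delta_g)^{-k}T_{iy}$ is bounded from $L_2$ to $W^{2k,2}$ with norm independent of $y$. Stein's interpolation theorem for analytic families, combined with Theorem~\ref{sobolev manifold interpolation thm}, then gives that $T_\theta=(1-\Delta_g)^{-k\theta}$ is bounded from $L_2$ into $[L_2,W^{2k,2}]_\theta=W^{2k\theta,2}$ for every $\theta\in(0,1)$. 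Taking $\theta=s/(2k)$ covers $s\in(0,2k)$, the endpoint $s=2k$ is handled directly by Step~1, and by choosing $k$ large enough every $s>0$ is reached.

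The main obstacle is the elliptic regularity input underlying Step~1, which is not proved earlier in the excerpt and must be imported from \cite{Taylor}. It reduces to the local estimate $\|u\|_{W^{r+2,2}(V)}\leq C(\|(1-\Delta_g)u\|_{W^{r,2}(V)}+\|u\|_{L_2(V)})$ on each chart $V$, glued across the atlas fixed in Section~\ref{manifold section} via a partition of unity; once this is in hand, the interpolation in Step~2 is essentially formal.
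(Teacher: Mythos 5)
Your proposal is correct and reaches the same conclusion, but the interpolation machinery you invoke differs from the paper's. The paper works with the abstract domain scale $H_s={\rm dom}(P^s)$ for $P=(1-\Delta_g)^{1/2}$: it first records the identity $[H_{s_1},H_{s_2}]_\theta=H_{(1-\theta)s_1+\theta s_2}$ (reduced to weighted $L_2$ interpolation via the spectral theorem), then shows $H_{2m}=W^{2m,2}(X,{\rm vol}_g)$ with equivalent norms by combining elliptic regularity (${\rm dom}((1-\Delta_g)^m)\subset W^{2m,2}$) \emph{and} the G\aa rding inequality, and finally invokes Theorem~\ref{sobolev manifold interpolation thm} on both sides to conclude $H_s=W^{s,2}$ for all $s>0$; the mapping property of $P^{-s}$ into $H_s$ is then tautological. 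You instead interpolate the \emph{operator family} $T_z=(1-\Delta_g)^{-kz}$ directly via Stein's theorem, needing only the one-sided boundedness $(1-\Delta_g)^{-k}:L_2\to W^{2k,2}$ from iterated elliptic regularity (no G\aa rding estimate, no identification of $H_{2m}$ as a set), together with the same Theorem~\ref{sobolev manifold interpolation thm} to identify $[L_2,W^{2k,2}]_\theta$. Your route is therefore slightly more economical on the PDE side, whereas the paper's route is more in the spirit of identifying the scale of spaces once and for all, which also yields the two-sided equivalence $H_s=W^{s,2}$ as a byproduct. Both are sound; make sure, when you write it out, to note that analyticity of $z\mapsto(1-\Delta_g)^{-kz}$ and the uniform boundary bounds come from the spectral theorem together with the bound $1-\Delta_g\geq1$, since this is what makes Stein's theorem applicable.
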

\begin{proof} Let $H$ be a Hilbert space and let $P:{\rm dom}(P)\to H$ be self-adjoint operator. Suppose, in addition, that $P\geq1.$ The space $H_s={\rm dom}(P^s)$ becomes a Hilbert space when equipped with the norm $\xi\to\|P^s\xi\|,$ $\xi\in H^s.$ It is immediate from this definition that $P^{-s}:H\to H_s.$ Note the complex interpolation\footnote{By the spectral theorem, it suffices to check it when $H=L_2(\Omega,\nu)$ and when $P$ is a multiplication operator. In this case, it is a standard result about complex interpolation of weighted $L_2$-spaces (see Theorem 5.4.1 in \cite{BerghLoefstrom}).}
	$$[H_{s_1},H_{s_2}]_{\theta}=H_{(1-\theta)s_1+\theta s_2},\quad s_1\neq s_2\in\mathbb{R}_+.$$
	In particular,
	$$[H_0,H_m]_{\theta}=H_{\theta m},\quad m\in\mathbb{Z}_+.$$
	
	Now, let $H=L_2(X,{\rm vol}_g)$ and $P=(1-\Delta_g)^{\frac12}.$ Obviously, $H_{2m}={\rm dom}((1-\Delta_g)^m)$ for $m\in\mathbb{Z}_+.$ By elliptic regularity, ${\rm dom}((1-\Delta_g)^m)\subset W^{2m,2}(X,{\rm vol}_g)$ (see e.g. Theorem 19.5.1 in \cite{Hor3}). By G\aa rding inequality (see e.g. Theorem 2.44 in \cite{Rosenberg}), we have
	$$\|(1-\Delta_g)^mu\|_{L_2(X,{\rm vol}_g)}\approx_{m,g}\|u\|_{W^{2m,2}(X,{\rm vol}_g)},\quad u\in W^{2m,2}(X,{\rm vol}_g).$$
	Thus, $H_{2m}=W^{2m,2}(X,{\rm vol}_g).$ By Theorem \ref{sobolev manifold interpolation thm}, we have
	$$[L_2(X,{\rm vol}_g),W^{2m,2}(X,{\rm vol}_g)]_{\theta}=W^{2\theta m,2}(X,{\rm vol}_g),\quad m\in\mathbb{Z}_+.$$
	Take $s\in\mathbb{R}_+,$ choose integer $2m>s$ and let $\theta=\frac{s}{2m}.$ We have
	$$H_s=W^{s,2}(X,{\rm vol}_g),\quad s\in\mathbb{R}_+.$$
	It follows immediately that
	$$(1-\Delta_g)^{-\frac{s}{2}}=P^{-s}:L_2(X,{\rm vol}_g)=H\to H_s=W^{s,2}(X,{\rm vol}_g),\quad s\in\mathbb{R}_+.$$
\end{proof}

\begin{proof}[Proof of Theorem \ref{solomyak cwikel estimate manifold}] {\bf Step 1:} Suppose first that $f\in L_{\infty}(X,g).$ Obviously,
	\begin{align*}
	\Big(M_f(1-\Delta_g)^{-\frac{d}{4}}\Big)_{L_2(X,{\rm vol}_g)\to L_2(X,{\rm vol}_g)}&=(M_f)_{W^{\frac{d}{2},2}(X,{\rm vol}_g)\to L_2(X,{\rm vol}_g)}\\ &\circ \Big((1-\Delta_g)^{-\frac{d}{4}}\Big)_{L_2(X,{\rm vol}_g)\to W^{\frac{d}{2},2}(X,{\rm vol}_g)}.
	\end{align*}
	By Lemma \ref{laplace sobolev lemma}, 
	$$\Big((1-\Delta_g)^{-\frac{d}{4}}\Big)_{L_2(X,{\rm vol}_g)\to W^{\frac{d}{2},2}(X,{\rm vol}_g)}$$
	is bounded. It follows that
	$$\Big\|M_f(1-\Delta_g)^{-\frac{d}{4}}\Big\|_{\mathcal{L}_{2,\infty}(L_2(X,{\rm vol}_g))}\leq AB,\ {\rm where}
	$$
	$$A=\Big\|M_f\Big\|_{\mathcal{L}_{2,\infty}(W^{\frac{d}{2},2}(X,{\rm vol}_g)\to L_2(X,{\rm vol}_g))},\ B=\Big\|(1-\Delta_g)^{-\frac{d}{4}}\Big\|_{L_2(X,{\rm vol}_g)\to W^{\frac{d}{2},2}(X,{\rm vol}_g)}.$$
	The assertion follows now from Lemma \ref{global sobolev lemma}.
	
	{\bf Step 2:} Suppose now that $f\in L_M^{(2)}(X,{\rm vol}_g).$ We claim that the operator
	$$M_f(1-\Delta_g)^{-\frac{d}{4}}$$
	is everywhere defined and bounded on $L_2(X,{\rm vol}_g).$ Moreover, we claim that
	$$\Big\|M_f(1-\Delta_g)^{-\frac{d}{4}}\Big\|_{\infty}\leq C_{X,g}\|f\|_{L_M^{(2)}(X,{\rm vol}_g)}.$$
	
	For every $\xi\in L_2(X,{\rm vol}_g),$ the function
	$$f\cdot (1-\Delta_g)^{-\frac{d}{4}}\xi$$
	is well-defined and measurable (as a product of two measurable functions). We show that this function belongs to $L_2(X,{\rm vol}_g)$ (and also establish the estimate for its $L_2$-norm from the above). Set
	$$f_n=\min\{|f|,n\},\quad n\in\mathbb{N}.$$
	If follows from the Fatou Theorem that
	$$\|f\cdot (1-\Delta_g)^{-\frac{d}{4}}\xi\|_{L_2(X,{\rm vol}_g)}=\sup_{n\in\mathbb{N}}\|f_n\cdot (1-\Delta_g)^{-\frac{d}{4}}\xi\|_{L_2(X,{\rm vol}_g)}.$$
	On the other hand, it follows from Step 1 that
	$$\|f_n\cdot (1-\Delta_g)^{-\frac{d}{4}}\xi\|_{L_2(X,{\rm vol}_g)}\leq \|M_{f_n}(1-\Delta_g)^{-\frac{d}{4}}\|_{\infty}\|\xi\|_{L_2(X,{\rm vol}_g)}\leq$$
	$$\leq\|M_{f_n}(1-\Delta_g)^{-\frac{d}{4}}\|_{2,\infty}\|\xi\|_{L_2(X,{\rm vol}_g)}\leq$$
	$$\leq C_{X,g}\|f_n\|_{L_M^{(2)}(X,{\rm vol}_g)}\|\xi\|_{L_2(X,{\rm vol}_g)}\leq C_{X,g}\|f\|_{L_M^{(2)}(X,{\rm vol}_g)}\|\xi\|_{L_2(X,{\rm vol}_g)}.$$
	It follows that
	$$\|f\cdot (1-\Delta_g)^{-\frac{d}{4}}\xi\|_{L_2(X,{\rm vol}_g)}\leq C_{X,g}\|f\|_{L_M^{(2)}(X,{\rm vol}_g)}\|\xi\|_{L_2(X,{\rm vol}_g)}.$$
	This proves the claim.
	
	{\bf Step 3:} It follows from Step 1 that
	$$\Big\|M_{f_n}(1-\Delta_g)^{-\frac{d}{4}}\Big\|_{\mathcal{L}_{2,\infty}(L_2(X,{\rm vol}_g))}\leq C_{X,g}\|f_n\|_{L_M^{(2)}(X,{\rm vol}_g)}\leq C_{X,g}\|f\|_{L_M^{(2)}(X,{\rm vol}_g)}.$$
	Using Step 2, we obtain
	$$\Big\|M_{f_n}(1-\Delta_g)^{-\frac{d}{4}}-M_{|f|}(1-\Delta_g)^{-\frac{d}{4}}\Big\|_{\infty}\leq C_{E,X,g}\big\|f_n-|f|\big\|_{L_M^{(2)}(X,{\rm vol}_g)}.$$
	Thus,
	$$M_{f_n}(1-\Delta_g)^{-\frac{d}{4}}\to M_{|f|}(1-\Delta_g)^{-\frac{d}{4}}$$
	in the uniform norm as $n\to\infty.$ By the Fatou property (see \cite{Simon-book} and \cite{LSZ-book}) for the space $\mathcal{L}_{2,\infty},$ it follows that
	$$\Big\|M_{|f|}(1-\Delta_g)^{-\frac{d}{4}}\Big\|_{\mathcal{L}_{2,\infty}(L_2(X,{\rm vol}_g))}\leq C_{X,g}\|f\|_{L_M^{(2)}(X,{\rm vol}_g)}.$$
	This completes the proof.
\end{proof}

\section{Abstract lemmas on the asymptotics of eigenvalues}

The starting point is the assertion known (in various forms) since at least 1940's.

\begin{lem}\label{limit of good operators mu lemma} Let $(T_n)_{n\geq0}\subset\mathcal{L}_{1,\infty}$ be  such that
$$\lim_{t\to\infty}t\mu(t,T_n)=\alpha_n,\quad n\geq0.$$
If $T_n\to T$ in $\mathcal{L}_{1,\infty},$ then $\alpha_n\to\alpha$ and
$$\lim_{t\to\infty}t\mu(t,T)=\alpha.$$
\end{lem}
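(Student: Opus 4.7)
The plan is to use the standard Ky~Fan subadditivity $\mu(s+t,A+B)\le\mu(s,A)+\mu(t,B)$ with an auxiliary parameter $\epsilon>0$. First I note that since $\mathcal{L}_{1,\infty}$ is a quasi-Banach space, $T\in\mathcal{L}_{1,\infty}$, so $\alpha^+:=\limsup_{t\to\infty}t\mu(t,T)$ and $\alpha^-:=\liminf_{t\to\infty}t\mu(t,T)$ both lie in $[0,\|T\|_{1,\infty}]$; similarly $\alpha_n\le\|T_n\|_{1,\infty}$, so $(\alpha_n)$ is bounded uniformly in $n$.

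For the key estimate, I apply subadditivity to $T=T_n+(T-T_n)$ with the asymmetric splitting $s=t$, $t'=\epsilon t$:
$$\mu((1+\epsilon)t,T)\le\mu(t,T_n)+\mu(\epsilon t,T-T_n).$$
Multiplying by $(1+\epsilon)t$ and using $\epsilon t\,\mu(\epsilon t,T-T_n)\le\|T-T_n\|_{1,\infty}$, then taking $\limsup_{t\to\infty}$, I get
$$\alpha^+\le(1+\epsilon)\alpha_n+\tfrac{1+\epsilon}{\epsilon}\|T-T_n\|_{1,\infty}.$$
A symmetric application with the roles of $T$ and $T_n$ swapped yields
$$\alpha_n\le(1+\epsilon)\alpha^-+\tfrac{1+\epsilon}{\epsilon}\|T-T_n\|_{1,\infty}.$$

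To finish, I send $n\to\infty$ so that $\|T-T_n\|_{1,\infty}\to 0$, obtaining $\alpha^+\le(1+\epsilon)\liminf_n\alpha_n$ and $\limsup_n\alpha_n\le(1+\epsilon)\alpha^-$. Chaining these with the trivial inequalities $\alpha^-\le\alpha^+$ and $\liminf_n\alpha_n\le\limsup_n\alpha_n$ gives
$$\alpha^-\le\alpha^+\le(1+\epsilon)\liminf_n\alpha_n\le(1+\epsilon)\limsup_n\alpha_n\le(1+\epsilon)^2\alpha^-.$$
Letting $\epsilon\to 0$ collapses all four quantities to a single value $\alpha\in[0,\|T\|_{1,\infty}]$; this simultaneously delivers $\alpha_n\to\alpha$ and $\lim_{t\to\infty}t\mu(t,T)=\alpha$, as required.

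The main obstacle to anticipate is that the naive symmetric splitting $s=t$ in $\mu(2t,A+B)\le\mu(t,A)+\mu(t,B)$ would yield only $\alpha^+\le 2\alpha_n+2\|T-T_n\|_{1,\infty}$, which is too lossy to pin the limit. The asymmetric splitting $(1+\epsilon)t=t+\epsilon t$ replaces the constant $2$ by $1+\epsilon$ (arbitrarily close to $1$) at the price of a blow-up factor $(1+\epsilon)/\epsilon$ in front of $\|T-T_n\|_{1,\infty}$; that factor is harmless because $\|T-T_n\|_{1,\infty}$ is sent to $0$ by taking $n\to\infty$ \emph{before} letting $\epsilon\to 0$.
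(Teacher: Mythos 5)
Your proof is correct and rests on exactly the same mechanism as the paper's: the asymmetric Ky Fan splitting $(1+\epsilon)t = t + \epsilon t$, the bound $\epsilon t\,\mu(\epsilon t, T-T_n)\le\|T-T_n\|_{1,\infty}$, and sending $n\to\infty$ before $\epsilon\to 0$. The only difference is organizational: the paper runs three separate steps (first establishing that $(\alpha_n)$ is Cauchy, then bounding $\limsup_t t\mu(t,T)$ from above by $\alpha$, then bounding $\liminf_t t\mu(t,T)$ from below by $\alpha$), whereas you collapse everything into the single four-term chain $\alpha^-\le\alpha^+\le(1+\epsilon)\liminf_n\alpha_n\le(1+\epsilon)\limsup_n\alpha_n\le(1+\epsilon)^2\alpha^-$ and let $\epsilon\to0$, so that convergence of $(\alpha_n)$ falls out of the squeeze automatically rather than being verified separately. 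This is a mild streamlining, not a different route; both arguments buy the same thing at the same cost. One small point worth being explicit about in the second estimate (the one giving $\alpha_n\le(1+\epsilon)\alpha^-+\tfrac{1+\epsilon}{\epsilon}\|T-T_n\|_{1,\infty}$): the left-hand side of the pointwise inequality converges, so you may take $\liminf_t$ of both sides and use $\liminf_t\bigl((1+\epsilon)t\mu(t,T)\bigr)=(1+\epsilon)\alpha^-$; stating this prevents a reader from wondering why a $\limsup$ did not appear there instead.
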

\begin{proof} To lighten the notations, we assume without loss of generality that $\|T_n\|_{1,\infty}\leq 1$ for $n\geq0.$ This also means that $\|T\|_{1,\infty}\leq 1.$
	
{\bf Step 1:} We show that $(\alpha_n)_{n\geq0}$ converges (its limit will be denoted by $\alpha$).
	
Fix $\epsilon\in(0,1)$ and choose $N$ such that $\|T_n-T_m\|_{1,\infty}\leq\epsilon^2$ for $n,m\geq N.$ We have
$$\mu(t,T_n)=\mu(t,T_m+(T_n-T_m))\leq \mu(\frac{t}{1+\epsilon},T_m)+\mu(\frac{t\epsilon}{1+\epsilon},T_n-T_m)\leq$$
$$\leq\mu(\frac{t}{1+\epsilon},T_m)+\frac{1+\epsilon}{t\epsilon}\cdot\|T_m-T_n\|_{1,\infty}\leq \leq\mu(\frac{t}{1+\epsilon},T_m)+\frac{2\epsilon}{t}.$$
Thus,
$$\lim_{t\to\infty}t\mu(t,T_n)\leq\lim_{t\to\infty}t\mu(\frac{t}{1+\epsilon},T_m)+2\epsilon.$$
In other words,
$$\alpha_n\leq (1+\epsilon)\alpha_m+2\epsilon.$$
Consequently,
$$\alpha_n-\alpha_m\leq 2\epsilon+\epsilon\alpha_m\leq\epsilon+\epsilon\|T_m\|_{1,\infty}\leq 3\epsilon.$$
Similarly,
$$\alpha_m-\alpha_n\leq 3\epsilon.$$
Finally,
$$|\alpha_m-\alpha_n|\leq 3\epsilon,\quad m,n\geq N.$$
Thus, $(\alpha_n)_{n\geq0}$ is a Cauchy sequence and the claim in Step 1 follows.
	
{\bf Step 2:} We show that
$$\limsup_{t\to\infty}t\mu(t,T)\leq \alpha.$$
	
Fix $\epsilon\in(0,1)$ and choose $N$ such that $\|T_n-T\|_{1,\infty}\leq\epsilon^2$ for $n\geq N.$ We have
$$\mu(t,T)=\mu(t,T+(T-T_n))\leq \mu(\frac{t}{1+\epsilon},T_n)+\mu(\frac{t\epsilon}{1+\epsilon},T-T_n)\leq$$
$$\leq\mu(\frac{t}{1+\epsilon},T_n)+\frac{1+\epsilon}{\epsilon t}\cdot\|T_n-T\|_{1,\infty}\leq \mu(\frac{t}{1+\epsilon},T_n)+2\epsilon t^{-1}.$$
Thus,
$$\limsup_{t\to\infty}t\mu(t,T)\leq\lim_{t\to\infty}t\mu(\frac{t}{1+\epsilon},T_n)+2\epsilon=(1+\epsilon)\alpha_n+2\epsilon\leq \alpha_n+3\epsilon.$$
Passing $n\to\infty,$ we obtain
$$\limsup_{t\to\infty}t\mu(t,T)\leq \alpha+3\epsilon.$$
Since $\epsilon$ is arbitrarily small, the claim in Step 2 follows.

{\bf Step 3:} We show that
$$\alpha\leq\liminf_{t\to\infty}t\mu(t,T).$$
	
Fix $\epsilon\in(0,1)$ and choose $N$ such that $\|T_n-T\|_{1,\infty}\leq\epsilon^2$ for $n\geq N.$ We have
$$\mu(t,T_n)=\mu(t,T_n+(T_n-T))\leq \mu(\frac{t}{1+\epsilon},T)+\mu(\frac{t\epsilon}{1+\epsilon},T_n-T)\leq$$
$$\leq\mu(\frac{t}{1+\epsilon},T)+\frac{1+\epsilon}{\epsilon t}\cdot\|T_n-T\|_{1,\infty}\leq \mu(\frac{t}{1+\epsilon},T)+2\epsilon t^{-1}.$$
Thus,
$$\alpha_n=\lim_{t\to\infty}t\mu(t,T_n)\leq\liminf_{t\to\infty}t\mu(\frac{t}{1+\epsilon},T)+2\epsilon=$$
$$=(1+\epsilon)\liminf_{t\to\infty}t\mu(t,T)+2\epsilon\leq \liminf_{t\to\infty}t\mu(t,T)+3\epsilon.$$
Passing $n\to\infty,$ we obtain
$$\alpha\leq\liminf_{t\to\infty}t\mu(t,T)+3\epsilon.$$
Since $\epsilon$ is arbitrarily small, the claim in Step 3 follows.
	
{\bf Step 4:} Combining the results of Steps 2 and 3, we write
$$\limsup_{t\to\infty}t\mu(t,T)\leq \alpha\leq\liminf_{t\to\infty}t\mu(t,T).$$
Thus,
$$\limsup_{t\to\infty}t\mu(t,T)=\alpha=\liminf_{t\to\infty}t\mu(t,T).$$
The assertion follows immediately.
\end{proof}
The following lemma is a by-product of recent studies of the authors jointly with J.Huang of operator $\theta$-Holder functions for quasi-norms \cite{HSZ}.

\begin{lem}\label{holder lemma} If $T,S\in\mathcal{L}_{1,\infty}$ are self-adjoint operators, then
$$\|T_+-S_+\|_{1,\infty}\leq c_{{\rm abs}}\|T-S\|_{1,\infty}^{\frac12}(\|T\|_{1,\infty}+\|S\|_{1,\infty})^{\frac12}.$$
\end{lem}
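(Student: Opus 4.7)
The plan is to reduce the claim to a Hölder-$\tfrac{1}{2}$ type estimate for the absolute value. Using self-adjointness and the identity $T_+=\tfrac{1}{2}(T+|T|)$ (and similarly for $S$), I write
$$T_+-S_+=\tfrac{1}{2}(T-S)+\tfrac{1}{2}(|T|-|S|).$$
The quasi-triangle inequality in $\mathcal{L}_{1,\infty}$ then reduces the problem to two bounds: one on $\|T-S\|_{1,\infty}$, and one on $\||T|-|S|\|_{1,\infty}$. The first is handled trivially by the elementary estimate
$$\|T-S\|_{1,\infty}\leq \|T-S\|_{1,\infty}^{1/2}\bigl(\|T\|_{1,\infty}+\|S\|_{1,\infty}\bigr)^{1/2},$$
where the inequality uses $\|T-S\|_{1,\infty}\leq \|T\|_{1,\infty}+\|S\|_{1,\infty}$ up to the absolute triangle constant of $\mathcal{L}_{1,\infty}$. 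So the real content is the bound on $\||T|-|S|\|_{1,\infty}$.

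For the absolute value, I use the algebraic factorisation (valid because $T,S$ are self-adjoint)
$$T^{2}-S^{2}=T(T-S)+(T-S)S.$$
Applying the weak H\"older inequality \eqref{weak holder} with $p=q=1$ and $r=\tfrac{1}{2}$, I obtain
$$\|T^{2}-S^{2}\|_{1/2,\infty}\leq c_{\mathrm{abs}}\bigl(\|T\|_{1,\infty}+\|S\|_{1,\infty}\bigr)\|T-S\|_{1,\infty}.$$
Since $T^{2}=|T|^{2}$ and $S^{2}=|S|^{2}$, what remains is to deduce a bound on $\||T|-|S|\|_{1,\infty}$ from this control on $\||T|^{2}-|S|^{2}\|_{1/2,\infty}$.

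That transfer is exactly a statement about the operator H\"older-$\tfrac{1}{2}$ behaviour of $x\mapsto \sqrt{x}$ on positive operators with respect to weak quasi-norms, namely
$$\|A^{1/2}-B^{1/2}\|_{1,\infty}^{2}\leq c_{\mathrm{abs}}\|A-B\|_{1/2,\infty}, \qquad A,B\geq 0,$$
which I would invoke from \cite{HSZ} (the lemma is introduced as a by-product of that work). Taking square roots in the previous display then yields
$$\||T|-|S|\|_{1,\infty}\leq c'_{\mathrm{abs}}\bigl(\|T\|_{1,\infty}+\|S\|_{1,\infty}\bigr)^{1/2}\|T-S\|_{1,\infty}^{1/2},$$
and combining this with the trivial bound on $\|T-S\|_{1,\infty}$ via the quasi-triangle inequality in $\mathcal{L}_{1,\infty}$ produces the asserted estimate. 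The main obstacle is the square-root inequality in the weak ideal; outside $\mathcal{L}_{p,\infty}$ with $p>1$ there is no Lipschitz bound, and one must rely precisely on the operator H\"older machinery of \cite{HSZ}. All other steps are algebra plus the weak H\"older inequality already recorded in the preliminaries.
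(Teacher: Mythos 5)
Your proof is correct, but it follows a genuinely different route from the paper's. The paper applies Equation (7) of \cite{HSZ} directly to the function $f(t)=t_+^{1/2}$ to obtain $\|T_+^{1/2}-S_+^{1/2}\|_{2,\infty}\leq c_{\mathrm{abs}}\|T-S\|_{1,\infty}^{1/2}$, and then concludes by writing $T_+-S_+=T_+^{1/2}(T_+^{1/2}-S_+^{1/2})+(T_+^{1/2}-S_+^{1/2})S_+^{1/2}$ and invoking H\"older. You instead first peel off the easy term via $T_+-S_+=\tfrac12(T-S)+\tfrac12(|T|-|S|)$, control $T^2-S^2$ by the factorisation $T(T-S)+(T-S)S$ and weak H\"older, and then pass from $\|T^2-S^2\|_{1/2,\infty}$ to $\||T|-|S|\|_{1,\infty}$ by the weak-type Birman--Koplienko--Solomyak (operator square-root) inequality, which is exactly Theorem 6.3 in \cite{HSZ} with $f(t)=|t|^{1/2}$ and $p=\theta=\tfrac12$ --- the same result the paper cites later in Lemma \ref{positive part abstract lemma}. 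Both arguments black-box the same machinery from \cite{HSZ}, only applied to different functions and operators: the paper applies it once to $t_+^{1/2}$ acting on the self-adjoint pair $(T,S)$, while you apply it to the square root acting on the positive pair $(T^2,S^2)$ after an extra algebraic reduction. Your version is a step longer but relies only on the classical BKS-type inequality for $\sqrt{\cdot}$ on positive operators, whereas the paper's is shorter but requires the $\theta$-H\"older estimate for the less standard function $t\mapsto t_+^{1/2}$ on all of $\mathbb{R}$; either is acceptable, and the constants only differ by absolute factors coming from the quasi-triangle inequality.
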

\begin{proof} Equation (7) in \cite{HSZ} taken with $f(t)=t_+^{\frac12},$ $t\in\mathbb{R},$ and with $p=\theta=\frac12$ reads as
$$|T_+^{\frac12}-S_+^{\frac12}|^{\frac12}\prec\prec c_{{\rm abs}}|T-S|^{\frac14}.$$
In particular,
$$\|T_+^{\frac12}-S_+^{\frac12}\|_{2,\infty}\leq c_{{\rm abs}}\|T-S\|_{1,\infty}^{\frac12}.$$
It is clear that
$$T_+-S_+=T_+^{\frac12}(T_+^{\frac12}-S_+^{\frac12})+(T_+^{\frac12}-S_+^{\frac12})S_+^{\frac12}.$$
The assertion follows now from H\"older inequality.
\end{proof}

\begin{cor}\label{limit of good operators corollary} Let $(T_n)_{n\geq0}\subset\mathcal{L}_{1,\infty}$ be self-adjoint operators such that
$$\lim_{k\to\infty}k\mu(k,(T_n)_+)=\alpha_n,\quad \lim_{k\to\infty}k\mu(k,(T_n)_-)=\beta_n,\quad n\geq0.$$
If $T_n\to T$ in $\mathcal{L}_{1,\infty},$ then $\alpha_n\to\alpha,$ $\beta_n\to\beta$ and
$$\lim_{k\to\infty}k\mu(k,T_+)=\alpha,\quad \lim_{k\to\infty}k\mu(k,T_-)=\beta.$$
\end{cor}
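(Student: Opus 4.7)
The plan is to reduce the corollary to the preceding two lemmas by splitting the self-adjoint operators into their positive and negative parts and showing that convergence in $\mathcal{L}_{1,\infty}$ is preserved under these operations.

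First I would observe that since $T_n\to T$ in $\mathcal{L}_{1,\infty}$, the norms $\|T_n\|_{1,\infty}$ are uniformly bounded, say by some constant $C$, and $\|T\|_{1,\infty}\leq C$ as well. Applying Lemma \ref{holder lemma} directly gives
$$\|(T_n)_+-T_+\|_{1,\infty}\leq c_{{\rm abs}}\|T_n-T\|_{1,\infty}^{\frac12}(\|T_n\|_{1,\infty}+\|T\|_{1,\infty})^{\frac12}\leq c_{{\rm abs}}(2C)^{\frac12}\|T_n-T\|_{1,\infty}^{\frac12},$$
so $(T_n)_+\to T_+$ in $\mathcal{L}_{1,\infty}$. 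For the negative parts, note that $(-T_n)_+=(T_n)_-$ and $-T_n\to -T$ in $\mathcal{L}_{1,\infty}$, so the same argument applied to the sequence $(-T_n)$ yields $(T_n)_-\to T_-$ in $\mathcal{L}_{1,\infty}$.

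Next I would apply Lemma \ref{limit of good operators mu lemma} twice. Applied to the sequence of operators $((T_n)_+)_{n\geq 0}\subset\mathcal{L}_{1,\infty}$ (which are in particular in $\mathcal{L}_{1,\infty}$ by assumption, via the hypothesis $k\mu(k,(T_n)_+)\to \alpha_n$, though really this follows from $T_n\in\mathcal{L}_{1,\infty}$) converging to $T_+$, with $t\mu(t,(T_n)_+)\to\alpha_n$, the lemma gives that $\alpha_n$ converges to some $\alpha$ and $\lim_{t\to\infty}t\mu(t,T_+)=\alpha$. Applying the same lemma to $((T_n)_-)_{n\geq 0}$ converging to $T_-$, with $t\mu(t,(T_n)_-)\to\beta_n$, yields that $\beta_n$ converges to some $\beta$ and $\lim_{t\to\infty}t\mu(t,T_-)=\beta$. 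This gives exactly the conclusion of the corollary.

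The main (already overcome) obstacle is the non-Lipschitz dependence of $T\mapsto T_+$ in $\mathcal{L}_{1,\infty}$; this is precisely what Lemma \ref{holder lemma} resolves by providing a $\frac12$-Hölder estimate, which is enough to transport norm convergence. Once that is in hand, the corollary is essentially a formal consequence of Lemma \ref{limit of good operators mu lemma}, which is the real workhorse. No further delicate estimates or new ideas are needed beyond the careful bookkeeping of applying the lemmas to the positive and negative parts separately.
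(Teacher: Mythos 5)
Your proof is correct and takes essentially the same approach as the paper: apply Lemma \ref{holder lemma} to get $(T_n)_+\to T_+$ and $(T_n)_-\to T_-$ in $\mathcal{L}_{1,\infty}$, then invoke Lemma \ref{limit of good operators mu lemma} for each part. The additional details you spell out (uniform boundedness of the norms and the observation $(-T_n)_+=(T_n)_-$) are exactly what the paper's terse proof implicitly relies on.
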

\begin{proof} By Lemma \ref{holder lemma}, we have $(T_n)_+\to T_+$ and $(T_n)_-\to T_-$ as $n\to\infty.$ The assertion follows from Lemma \ref{limit of good operators mu lemma}.
\end{proof}

\begin{lem}\label{second holder lemma} Let $T,S\in\mathcal{L}_{1,\infty}$ be self-adjoint elements such that $T-S\in(\mathcal{L}_{1,\infty})_0.$ We have that $T_+-S_+\in(\mathcal{L}_{1,\infty})_0.$
\end{lem}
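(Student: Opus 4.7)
The plan is to upgrade the argument of Lemma \ref{holder lemma} so that it tracks membership in the separable part $(\mathcal{L}_{1,\infty})_0$, rather than just the $\mathcal{L}_{1,\infty}$-quasinorm. The engine is the very same submajorization from equation (7) of \cite{HSZ} that already appeared in the proof of Lemma \ref{holder lemma}, namely
$$|T_+^{1/2}-S_+^{1/2}|^{1/2}\prec\prec c_{\rm abs}|T-S|^{1/4}.$$

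First I would promote the conclusion drawn from this submajorization from $\mathcal{L}_{2,\infty}$ to $(\mathcal{L}_{2,\infty})_0$. The hypothesis $T-S\in(\mathcal{L}_{1,\infty})_0$ means $\mu(t,T-S)=o(t^{-1})$, so $\mu(t,|T-S|^{1/4})=o(t^{-1/4})$, i.e.\ $|T-S|^{1/4}\in(\mathcal{L}_{4,\infty})_0$. Integrating gives $\int_0^t\mu(s,|T-S|^{1/4})\,ds=o(t^{3/4})$, and the submajorization transports this to $\int_0^t\mu(s,|T_+^{1/2}-S_+^{1/2}|^{1/2})\,ds=o(t^{3/4})$. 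Combining with the monotonicity bound $t\mu(t,X)\le 2\int_0^t\mu(s,X)\,ds$ yields $\mu(t,|T_+^{1/2}-S_+^{1/2}|^{1/2})=o(t^{-1/4})$, equivalently $T_+^{1/2}-S_+^{1/2}\in(\mathcal{L}_{2,\infty})_0$.

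Second, I would factor exactly as in the proof of Lemma \ref{holder lemma}:
$$T_+-S_+=T_+^{1/2}\bigl(T_+^{1/2}-S_+^{1/2}\bigr)+\bigl(T_+^{1/2}-S_+^{1/2}\bigr)S_+^{1/2}.$$
Both outer factors $T_+^{1/2}$ and $S_+^{1/2}$ lie in $\mathcal{L}_{2,\infty}$ (since $T_+,S_+\in\mathcal{L}_{1,\infty}$), while the middle factor lies in $(\mathcal{L}_{2,\infty})_0$ by Step~1. The elementary inclusion $\mathcal{L}_{2,\infty}\cdot(\mathcal{L}_{2,\infty})_0\subset(\mathcal{L}_{1,\infty})_0$ (and its symmetric analogue) is immediate: approximating the separable-part factor by finite rank operators in the $\mathcal{L}_{2,\infty}$-quasinorm and using the weak H\"older inequality \eqref{weak holder}, the approximating products are finite rank and converge in $\mathcal{L}_{1,\infty}$. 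Each summand therefore lies in $(\mathcal{L}_{1,\infty})_0$, and so does $T_+-S_+$.

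The main point requiring care is the ``$o$''-version of the submajorization step: Lemma \ref{holder lemma} only extracted a quasinorm bound, but here I need a separable-part conclusion. The bookkeeping above handles it — one simply tracks the integral $\int_0^t\mu\,ds$ rather than $\sup_t t\mu(t,\cdot)$ — but this is the only nontrivial departure from the argument already present in Lemma \ref{holder lemma}.
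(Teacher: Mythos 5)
Your argument is correct and follows the paper's own proof essentially verbatim: the same submajorization from equation (7) of \cite{HSZ} with $f(t)=t_+^{1/2}$ and $p=\theta=\tfrac12$, the same $o$-bookkeeping via $\int_0^t\mu\,ds$ (the paper's discrete sum $\sum_{k=0}^n\mu^{1/2}(k,\cdot)\le c_{\rm abs}\sum_{k=0}^n\mu^{1/4}(k,T-S)=o(n^{3/4})$), the same factorization $T_+-S_+=T_+^{1/2}(T_+^{1/2}-S_+^{1/2})+(T_+^{1/2}-S_+^{1/2})S_+^{1/2}$, and the same H\"older step. Your explicit justification of $\mathcal{L}_{2,\infty}\cdot(\mathcal{L}_{2,\infty})_0\subset(\mathcal{L}_{1,\infty})_0$ merely spells out what the paper leaves implicit.
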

\begin{proof} Equation (7) in \cite{HSZ} taken with $f(t)=t_+^{\frac12},$ $t\in\mathbb{R},$ and with $p=\theta=\frac12$ reads as
$$|T_+^{\frac12}-S_+^{\frac12}|^{\frac12}\prec\prec c_{{\rm abs}}|T-S|^{\frac14}.$$
That is,
$$(n+1)\mu^{\frac12}(n,T_+^{\frac12}-S_+^{\frac12})\leq \sum_{k=0}^n\mu^{\frac12}(k,T_+^{\frac12}-S_+^{\frac12})\leq c_{{\rm abs}}\sum_{k=0}^n\mu^{\frac14}(k,T-S)=o(n^{\frac34})$$
as $n\to\infty.$ In other words,
$$T_+^{\frac12}-S_+^{\frac12}\in (\mathcal{L}_{2,\infty})_0.$$
It is clear that
$$T_+-S_+=T_+^{\frac12}(T_+^{\frac12}-S_+^{\frac12})+(T_+^{\frac12}-S_+^{\frac12})S_+^{\frac12}.$$
The assertion follows now from H\"older inequality.	
\end{proof}

\begin{lem}\label{first good operator fact} Let $T,S\in\mathcal{L}_{1,\infty}$ be self-adjoint elements such that $T-S\in(\mathcal{L}_{1,\infty})_0.$ If
$$\lim_{t\to\infty}t\mu(t,T)=\alpha,{\mbox then }\lim_{t\to\infty}t\mu(t,S)=\alpha.$$
\end{lem}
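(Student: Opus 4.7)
The plan is to reduce Lemma \ref{first good operator fact} to Lemma \ref{limit of good operators mu lemma} by approximating $S$ with operators that differ from $T$ only by a finite-rank perturbation. Since $T-S\in(\mathcal{L}_{1,\infty})_0$ and finite-rank operators are by definition dense there, I would choose finite-rank operators $F_n$ with $\|(T-S)-F_n\|_{1,\infty}\to 0$ and set $T_n:=T-F_n.$ Then $T_n-S=(T-S)-F_n\to 0$ in $\mathcal{L}_{1,\infty},$ so $T_n\to S$ in this quasi-norm.

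With this sequence in hand, it suffices to show that $\lim_{t\to\infty}t\mu(t,T_n)=\alpha$ for each fixed $n,$ because then Lemma \ref{limit of good operators mu lemma} applied to the constant sequence $\alpha_n=\alpha$ delivers the desired $\lim_{t\to\infty}t\mu(t,S)=\alpha.$ The required invariance of the asymptotic $t\mu(t,\cdot)\to\alpha$ under a finite-rank perturbation is itself a straightforward consequence of the singular value sub-additivity $\mu(m+n,A+B)\leq\mu(m,A)+\mu(n,B)$ already used throughout this section. Indeed, if $F_n$ has rank at most $r_n,$ then $\mu(r_n,F_n)=0,$ and taking $A=T,$ $B=-F_n$ yields $\mu(t+r_n,T_n)\leq \mu(t,T),$ hence
$$(t+r_n)\mu(t+r_n,T_n)\leq \Big(1+\tfrac{r_n}{t}\Big)\cdot t\mu(t,T)\xrightarrow{t\to\infty}\alpha,$$
which gives $\limsup_{s\to\infty} s\mu(s,T_n)\leq\alpha.$ The symmetric application with $A=T_n,$ $B=F_n$ gives $(t+r_n)\mu(t+r_n,T)\leq(1+r_n/t)\cdot t\mu(t,T_n),$ whence $\alpha\leq\liminf_{s\to\infty} s\mu(s,T_n);$ together, $\lim_{s\to\infty}s\mu(s,T_n)=\alpha$ as required.

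I do not anticipate a real obstacle here: everything rests on the density of finite-rank operators in $(\mathcal{L}_{1,\infty})_0,$ the singular value sub-additivity recalled above, and the already established Lemma \ref{limit of good operators mu lemma}. It is worth noting that the self-adjointness hypothesis on $T$ and $S$ is never actually invoked by this argument; it is presumably imposed because the lemma will be applied in tandem with Lemma \ref{second holder lemma} to transfer asymptotics of $T_\pm$ to those of $S_\pm$ in the proof of Theorem \ref{cif manifold}.
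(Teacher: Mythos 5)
Your proposal is correct, but it takes a genuinely different route from the paper. The paper argues directly: since $T-S\in(\mathcal{L}_{1,\infty})_0$, one has $\mu(s,T-S)=o(s^{-1})$, and then a single application of the sub-additivity $\mu(t,S)\leq\mu(\tfrac{t}{1+\epsilon},T)+\mu(\tfrac{\epsilon t}{1+\epsilon},T-S)$ (and its symmetric counterpart with the roles of $T,S$ swapped) yields $\limsup_{t\to\infty}t\mu(t,S)\leq(1+\epsilon)\alpha$ and $\alpha\leq(1+\epsilon)\liminf_{t\to\infty}t\mu(t,S)$; letting $\epsilon\to0$ finishes the argument in two short steps. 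You instead invoke the density of finite-rank operators in $(\mathcal{L}_{1,\infty})_0$ to build a sequence $T_n=T-F_n$, prove that each finite-rank perturbation preserves the exact asymptotic $\lim_{t\to\infty}t\mu(t,T_n)=\alpha$ via the additive shift $\mu(t+r_n,T_n)\leq\mu(t,T)$, and then feed the constant sequence $\alpha_n\equiv\alpha$ into Lemma~\ref{limit of good operators mu lemma}. Both proofs are sound; the paper's is shorter and self-contained, while yours makes the role of finite-rank density explicit and cleanly isolates the elementary observation that rank-$r$ perturbations merely shift the index by $r$ and therefore cannot change a $\Theta(t^{-1})$ asymptotic. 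You are also right that self-adjointness is not actually used in either argument and is present only so that the lemma can later be applied to $T_\pm$ and $S_\pm$ alongside Lemma~\ref{second holder lemma}.
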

\begin{proof} Fix $\epsilon>0.$ We have	
$$\mu(t,S)\leq \mu(\frac{t}{1+\epsilon},T)+\mu(\frac{\epsilon t}{1+\epsilon},T-S)\leq \frac{(1+\epsilon)\alpha}{t}+o(t^{-1}),\quad t\to\infty.$$
Thus,
$$\limsup_{t\to\infty}t\mu(t,S)\leq (1+\epsilon)\alpha.$$
On the other hand, we have
$$\mu(t,T)\leq \mu(\frac{t}{1+\epsilon},S)+\mu(\frac{\epsilon t}{1+\epsilon},T-S)= \mu(\frac{t}{1+\epsilon},S)+o(t^{-1}),\quad t\to\infty.$$
Thus,
$$\alpha\leq\liminf_{t\to\infty}t\mu(\frac{t}{1+\epsilon},S)=(1+\epsilon)\liminf_{t\to\infty}t\mu(t,S).$$

Since $\epsilon>0$ is arbitrary, it follows that
$$\limsup_{t\to\infty}t\mu(t,S)\leq \alpha\leq \liminf_{t\to\infty}t\mu(t,S).$$
In other words,
$$\limsup_{t\to\infty}t\mu(t,S)=\alpha=\liminf_{t\to\infty}t\mu(t,S).$$
This completes the proof.
\end{proof}

Below, tensor product of sequences $\alpha$ and $\beta$ is a double sequence given by the formula
$$(\alpha\otimes\beta)(k,l)=\alpha(k)\beta(l),\quad k,l\in\mathbb{Z}_+.$$
Tensor product of a sequence and a function (on $(0,\infty)$) is defined similarly
$$(\alpha\otimes f)(k,s)=\alpha(k)f(s),\quad k\in\mathbb{Z}_+,\quad s\in(0,\infty).$$

\begin{lem}\label{tensor lemma} Let $z(n)=\frac1{n+1}.$ For every finite sequence $\alpha,$ there exists a limit
$$\lim_{t\to\infty}t\mu(t,z\otimes\alpha)=\|\alpha\|_1.$$
\end{lem}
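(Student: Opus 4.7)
The plan is to analyze the distribution function of $z\otimes\alpha$ directly. Replacing $\alpha$ by $|\alpha|$ changes neither $\|\alpha\|_1$ nor the rearrangement $\mu(z\otimes\alpha)$, so I may assume $\alpha(l)\geq 0$ throughout, and write $M$ for the cardinality of ${\rm supp}(\alpha)$ (finite by hypothesis).

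First, for $s>0$ I would count the entries of $z\otimes\alpha$ that exceed $s$:
$$N(s):=\#\Big\{(k,l):\frac{\alpha(l)}{k+1}>s\Big\}=\sum_{l}\Big\lfloor\frac{\alpha(l)}{s}\Big\rfloor.$$
Summing the elementary bound $x-1<\lfloor x\rfloor\leq x$ over the $M$ nonzero indices yields
$$\|\alpha\|_1-Ms< sN(s)\leq\|\alpha\|_1,$$
so $sN(s)\to\|\alpha\|_1$ as $s\to 0^+$.

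Second, I would convert this into asymptotics for $\mu$. Setting $s_n:=\mu(n,z\otimes\alpha)$, the definition of the decreasing rearrangement gives $N(s_n)\leq n$ and $N(s)\geq n+1$ for every $s<s_n$; also $s_n\to 0$ because $z\otimes\alpha$ is a null sequence. Combining $N(s_n)\leq n$ with the lower estimate above yields $\|\alpha\|_1-Ms_n\leq s_n N(s_n)\leq ns_n$, hence $\liminf_n ns_n\geq\|\alpha\|_1$. Combining $N(s)\geq n+1$ (for $s<s_n$) with the upper estimate $sN(s)\leq\|\alpha\|_1$ gives $s(n+1)\leq\|\alpha\|_1$ for all such $s$; letting $s\nearrow s_n$ produces $s_n(n+1)\leq\|\alpha\|_1$, so $\limsup_n ns_n\leq\|\alpha\|_1$. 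Together, $n\mu(n,z\otimes\alpha)\to\|\alpha\|_1$.

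Finally, the transition from integer $n$ to continuous $t$ is routine: with the step-function convention $\mu(t)=\mu(\lfloor t\rfloor)$ from Subsection \ref{ideal subsection}, the sandwich $\lfloor t\rfloor s_{\lfloor t\rfloor}\leq t s_{\lfloor t\rfloor}\leq (\lfloor t\rfloor+1)s_{\lfloor t\rfloor}$ gives the desired limit, both outer quantities tending to $\|\alpha\|_1$. I anticipate no serious obstacle; the entire argument is a careful accounting of strict versus weak inequalities in the distribution function, with the error term $Ms_n$ harmless because ${\rm supp}(\alpha)$ is finite and $s_n\to 0$.
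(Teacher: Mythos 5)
Your argument is correct, but it takes a genuinely different route from the paper's. The paper introduces the \emph{continuous} kernel $Z(t)=t^{-1}$ on $(0,\infty)$ and uses the exact identity $\mu(Z\otimes\alpha)=\|\alpha\|_1 Z$ (the distribution function of $Z\otimes\alpha$ is a sum of lengths of intervals, hence exactly $\|\alpha\|_1/s$). The upper bound then drops out of $z\leq Z$ pointwise, and the lower bound is obtained by splitting $Z=Z\chi_{(0,1)}+Z\chi_{(1,\infty)}$, noting that $Z\chi_{(0,1)}\otimes\alpha$ is supported on a set of measure equal to the length $n$ of $\alpha$ (so its $\mu$ vanishes past $n$) while $\mu(Z\chi_{(1,\infty)})\leq z$; subadditivity of $\mu$ then yields $\|\alpha\|_1=t\mu(t,Z\otimes\alpha)\leq t\mu(t-n,z\otimes\alpha)$. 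You instead compute the distribution function $N(s)$ of $z\otimes\alpha$ directly, use the floor bounds $x-1<\lfloor x\rfloor\leq x$, and then pass from $N$ to $\mu$ via the standard duality $N(\mu(n,\cdot))\leq n$ and $N(s)\geq n+1$ for $s<\mu(n,\cdot)$. Both are sound; the paper's argument is a slick comparison against a kernel with an exactly computable rearrangement, while yours is a more hands-on, self-contained counting argument that requires no auxiliary function. One small inaccuracy worth flagging: the exact count for the strict inequality $\alpha(l)/(k+1)>s$ over $k\geq 0$ is $\lceil\alpha(l)/s\rceil-1$, not $\lfloor\alpha(l)/s\rfloor$ (the two differ by one precisely when $\alpha(l)/s\in\mathbb{N}$), but since both quantities lie in the interval $(\alpha(l)/s - 1,\ \alpha(l)/s]$, your two-sided bound $\|\alpha\|_1-Ms< sN(s)\leq\|\alpha\|_1$ and the resulting limit are unaffected.
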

\begin{proof} Let $Z(t)=t^{-1},$ $t>0.$ Note the key fact
$$\mu(Z\otimes\alpha)=\|\alpha\|_1Z.$$

Clearly, $z\leq Z.$ Thus,
\begin{equation}\label{tensor eq0}
t\mu(t,z\otimes\alpha)\leq t\mu(t,Z\otimes\alpha)=\|\alpha\|_1.
\end{equation}

Suppose $\alpha$ is a sequence of length $n.$ We have
$$\mu(t,Z\otimes\alpha)\leq\mu(n,Z\chi_{(0,1)}\otimes\alpha)+\mu(t-n,Z\chi_{(1,\infty)}\otimes\alpha),\quad t>n.$$
Obviously, $Z\chi_{(0,1)}\otimes\alpha$ is supported on a set of measure $n.$ Thus,
$$\mu(n,Z\chi_{(0,1)}\otimes\alpha)=0.$$
Obviously, $\mu(Z\chi_{(1,\infty)})\leq z.$ Thus,
$$\mu(t-n,Z\chi_{(1,\infty)}\otimes\alpha)\leq\mu(t-n,z\otimes\alpha).$$
Consequently,
\begin{equation}\label{tensor eq1}
\|\alpha\|_1=t\mu(t,Z\otimes\alpha)\leq t\mu(t-n,z\otimes\alpha),\quad t>n.
\end{equation}

The assertion follows by combining \eqref{tensor eq0} and \eqref{tensor eq1}.
\end{proof}

In the following lemma (and further below), the notation $\oplus_{k\in\mathbb{Z}}T_k$ is a shorthand for an element $\sum_{k\in\mathbb{Z}}T_k\otimes e_k$ in the von Neumann algebra $B(H)\bar{\otimes}l_{\infty}(\mathbb{Z}).$ Here, $e_k$ is the unit vector having the only non-zero component on the $k$-th position.

\begin{lem}\label{second good operator fact} Let $(T_k)_{1\leq k\leq K}\subset\mathcal{L}_{1,\infty}$ be such that 
$$\lim_{t\to\infty}t\mu(t,T_k)=\alpha_k,\quad 1\leq k\leq K.$$
It follows that
$$\lim_{t\to\infty}t\mu\Big(t,\bigoplus_{1\leq k\leq K}T_k\Big)=\sum_{1\leq k\leq K}\alpha_k.$$
\end{lem}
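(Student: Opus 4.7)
The plan is to compare $T := \bigoplus_{k=1}^K T_k$ to an explicit diagonal model operator $S$ whose singular values can be computed by Lemma \ref{tensor lemma}, then to use Lemma \ref{first good operator fact} to transfer the asymptotic from $S$ to $T$.

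Since the statement depends only on the singular value function, and $|\bigoplus_k T_k| = \bigoplus_k |T_k|$ together with $\mu(T_k) = \mu(|T_k|)$, I may assume without loss of generality that each $T_k$ is a positive diagonal operator in a fixed orthonormal basis, with $n$-th diagonal entry equal to $\mu(n, T_k)$. Let $S_k$ be the positive diagonal operator on the same basis with $n$-th entry $\alpha_k/(n+1)$, and set $S := \bigoplus_{k=1}^K S_k$. Then the eigenvalue multiset of $S$ is the double sequence $z\otimes\alpha$ with $z(n)=1/(n+1)$ and $\alpha=(\alpha_1,\ldots,\alpha_K)$, so $\mu(S)$ is the decreasing rearrangement of this double sequence, and Lemma \ref{tensor lemma} yields
$$\lim_{t\to\infty} t\mu(t,S) = \|\alpha\|_1 = \sum_{k=1}^K \alpha_k.$$

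The main technical step is to show that $T-S\in(\mathcal{L}_{1,\infty})_0$. For each $k$, the diagonal operator $T_k-S_k$ has $n$-th entry $b_n^{(k)} := \mu(n,T_k)-\alpha_k/(n+1)$, and the hypothesis $t\mu(t,T_k)\to\alpha_k$ directly gives $b_n^{(k)} = o(1/n)$. The delicate point is that an $o(1/n)$ bound on a \emph{signed} sequence implies the same decay for the decreasing rearrangement of its absolute values; this I would establish by a counting-function estimate: given $\varepsilon>0$, choose $N_\varepsilon$ so that $|b_n^{(k)}|<\varepsilon/n$ for $n\geq N_\varepsilon$; then for small $\lambda>0$,
$$\big|\{n:|b_n^{(k)}|>\lambda\}\big|\leq N_\varepsilon+\varepsilon/\lambda,$$
so $\lambda\cdot|\{n:|b_n^{(k)}|>\lambda\}|\leq \lambda N_\varepsilon+\varepsilon\to\varepsilon$ as $\lambda\to0^+$. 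Since $\varepsilon$ is arbitrary, the distribution function of $T_k-S_k$ satisfies $\lambda N_{T_k-S_k}(\lambda)\to 0$, which is equivalent to $t\mu(t,T_k-S_k)\to 0$, i.e.\ $T_k-S_k\in(\mathcal{L}_{1,\infty})_0$. A finite direct sum of elements of $(\mathcal{L}_{1,\infty})_0$ remains in $(\mathcal{L}_{1,\infty})_0$ (approximate each summand by finite rank in the quasi-norm and use the quasi-triangle inequality), so $T-S\in(\mathcal{L}_{1,\infty})_0$.

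Since $T$ and $S$ are self-adjoint, $T-S\in(\mathcal{L}_{1,\infty})_0$, and $t\mu(t,S)\to\sum_k\alpha_k$, Lemma \ref{first good operator fact} concludes $\lim_{t\to\infty} t\mu(t,T)=\sum_k\alpha_k$, as required. The main obstacle is the intermediate counting-function estimate that upgrades the entrywise $o(1/n)$ decay of $b_n^{(k)}$ to the same decay of the singular value function $\mu(\cdot,T_k-S_k)$; once that is in hand, the rest is a direct combination of Lemma \ref{tensor lemma} and Lemma \ref{first good operator fact}.
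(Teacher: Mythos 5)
Your proof is correct and follows essentially the same route as the paper's: build a model operator $S=\bigoplus_k S_k$ with $\mu(S_k)=\alpha_k z$, compute $\lim_t t\mu(t,S)=\sum_k\alpha_k$ from Lemma~\ref{tensor lemma}, show $T-S\in(\mathcal{L}_{1,\infty})_0$, and conclude via Lemma~\ref{first good operator fact}. The one place you go further than the paper is worth pointing out: the paper simply asserts that one can ``choose $S_k\in\mathcal{L}_{1,\infty}$ such that $\mu(S_k)=\alpha_k z$ and $S_k-T_k\in(\mathcal{L}_{1,\infty})_0$'' without justification. Your reduction to the positive diagonal picture and the counting-function estimate --- showing that an entrywise $o(1/n)$ bound on the signed diagonal difference $b_n^{(k)}=\mu(n,T_k)-\alpha_k/(n+1)$ forces its decreasing rearrangement to be $o(1/n)$, via $\lambda\,|\{n:|b_n^{(k)}|>\lambda\}|\le\lambda N_\varepsilon+\varepsilon$ --- is precisely the argument needed to legitimize that choice, so your write-up is actually more complete than the paper's at this step. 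One small remark: Lemma~\ref{first good operator fact} is stated for self-adjoint operators, and after your reduction both $T$ and $S$ are positive diagonal, so the hypothesis is satisfied (though inspection of the paper's proof of that lemma shows self-adjointness is never used).
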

\begin{proof} Let $z(n)=\frac1{n+1}.$ For every $1\leq k\leq K,$ choose $S_k\in\mathcal{L}_{1,\infty}$ such that
$$\mu(S_k)=\alpha_k z,$$
and such that $S_k-T_k\in(\mathcal{L}_{1,\infty})_0.$ We have
$$\mu\Big(\bigoplus_{1\leq k\leq K}S_k\Big)=\mu\Big(z\otimes \big\{\alpha_k\big\}_{1\leq k\leq K}\Big).$$
It follows from Lemma \ref{tensor lemma} that
$$\lim_{t\to\infty}t\mu\Big(t,\bigoplus_{1\leq k\leq K}S_k\Big)=\sum_{1\leq k\leq K}\alpha_k.$$
On the other hand, we have
$$\bigoplus_{1\leq k\leq K}S_k-\bigoplus_{1\leq k\leq K}T_k\in(\mathcal{L}_{1,\infty})_0.$$
The assertion follows now from Lemma \ref{first good operator fact}.	
\end{proof}

\section{Proof of Theorem \ref{cif manifold}}\label{cif manifold section}

In this section, we prove an asymptotic formula for singular values in Theorem \ref{cif manifold}. It is modeled after the proof of Lemma 1 in \cite{Birman-Solomyak-SMZh}.

We refer the reader to \cite{RuzhanskyTurunen} for the theory of pseudo-differential operators.

\begin{defi} Pseudo-differential operator $Q:\mathcal{S}(\mathbb{R}^d)\to\mathcal{S}(\mathbb{R}^d)$ is called compactly supported if there exists $\phi\in C^{\infty}_c(\mathbb{R}^d)$ such that $M_{\phi}QM_{\phi}=Q.$
\end{defi}

The definition below should be compared with the Definition 10.2.24 in \cite{LSZ-book}.
\begin{defi}\label{classical psdo def} Pseudo-differential operator $Q:\mathcal{S}(\mathbb{R}^d)\to\mathcal{S}(\mathbb{R}^d)$ with the symbol $q$ of order ${\rm ord}(Q)$ is called classical if there exists a sequence $(q_n)_{n\leq{\rm ord}(Q)}$ of functions on $\mathbb{R}^d\times\mathbb{R}^d$ and sequence $(m_n)_{n\leq {\rm ord}(Q)}$ of real numbers such that
\begin{enumerate}[{\rm (i)}]
\item sequence $(m_n)_{n\leq{\rm ord}(Q)}$ is strictly increasing, $m_{{\rm ord}(Q)}={\rm ord}(Q)$ and $m_n\to-\infty$ as $n\to-\infty;$
\item for every $n\leq{\rm ord}(Q),$ $q_n$ is homogeneous of degree $m_n$ in the second variable;
\item for every $n\leq{\rm ord}(Q),$ $q_n\in C^{\infty}(\mathbb{R}^d\times\mathbb{S}^{d-1})$ (i.e., function and all its derivatives are bounded); 
\item for every $k\leq{\rm ord}(Q),$ we have
$$q-\sum_{n=k}^{{\rm ord}(Q)}q_n\cdot (1-\phi)$$
is a symbol of order $m_{k-1};$
\end{enumerate}	
Here, $\phi:\mathbb{R}^d\times\mathbb{R}^d\to\mathbb{C}$ does not depend on the first variable, is compactly supported in the second variable and equals $1$ near $0.$ 	
\end{defi}

The key ingredient is a re-statement of Theorems 1 and 2 in \cite{Birman-Solomyak-vestnik-1977}. In the notation of \cite{Birman-Solomyak-vestnik-1977}, $m=d,$ $\mu=1;$ $a$ and $c$ are smooth functions on $\mathbb{R}^d$ compactly supported in the cube; $b$ smooth (except at $0$) function on $\mathbb{R}^d\times\mathbb{R}^d$ compactly supported in the first argument and homogeneous of degree $-d$ in the second argument (so that $\gamma=1$ and $\tau_1=\cdots=\tau_d=1$). The fact that $a_{\gamma}$ is a Schur multiplier on $\mathcal{L}_{\beta,\infty}$ follows by writing $a|_{\mathbb{R}^d\times\mathbb{S}^{d-1}}$ as Fourier series in spherical functions (see a similar argument in Lemma 8.1 in \cite{SZ-DAO1}). That is, we are in the conditions of the third part of Theorem 1 in \cite{Birman-Solomyak-vestnik-1977}. That theorem together with Theorem 2 in \cite{Birman-Solomyak-vestnik-1977} yields the assertion below.

\begin{thm}\label{birsol psdo theorem} Let $Q$ be a classical compactly supported pseudo-differential operator of order $-d$ on $\mathbb{R}^d.$ If $Q$ is self-adjoint, then there exists a limit
$$\lim_{t\to\infty}t\mu(t,Q).$$
\end{thm}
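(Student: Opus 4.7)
The plan is to reduce the claim to a direct citation of Theorems~1 and 2 in \cite{Birman-Solomyak-vestnik-1977}. First I would decompose the symbol $q$ of $Q$ by truncating the classical expansion: set $q^{(0)}=q_{{\rm ord}(Q)}\cdot(1-\phi)$ (the leading homogeneous term, cut off near $\xi=0$), and let $r=q-q^{(0)}$. By condition (iv) of Definition \ref{classical psdo def}, $r$ is a symbol of order $m_{-d-1}<-d$. Let $Q^{(0)}$ and $R$ be the corresponding pseudo-differential operators, both compactly supported in $x$ (after multiplying by $\phi\in C_c^\infty$ from left and right, which we may do without changing $Q$).

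Since $R$ is compactly supported and of order strictly less than $-d$, standard Weyl-type estimates (or the Cwikel-type estimates above, e.g.\ Theorem~\ref{solomyak cwikel estimate manifold} applied on a torus containing the support) yield $R\in\mathcal{L}_{p,\infty}$ for some $p<1$, whence $R\in(\mathcal{L}_{1,\infty})_0$. Lemma \ref{first good operator fact} (applied to the self-adjoint parts of $Q^{(0)}$ and $R$; if $Q^{(0)}$ is not self-adjoint on the nose, replace it by its self-adjoint part and push the antisymmetric piece into $R$) then shows it suffices to establish the limit $\lim_{t\to\infty}t\mu(t,Q^{(0)})$.

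For $Q^{(0)}$ I would invoke the Birman--Solomyak framework directly. The symbol $q^{(0)}(x,\xi)$ factors, for $|\xi|$ large, as a homogeneous function of degree $-d$ in $\xi$ with smooth angular dependence times a compactly supported cutoff in $x$. Following the suggestion in the paragraph preceding the theorem statement, I would expand $q_{{\rm ord}(Q)}|_{\mathbb{R}^d\times\mathbb{S}^{d-1}}$ in a Fourier series in spherical harmonics and use the resulting rapidly decaying coefficients to verify the Schur multiplier hypothesis on $\mathcal{L}_{\beta,\infty}$ required by the third part of Theorem~1 of \cite{Birman-Solomyak-vestnik-1977} (as in Lemma~8.1 of \cite{SZ-DAO1}). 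Once the hypotheses of Theorem~1 are met, Theorem~2 of \cite{Birman-Solomyak-vestnik-1977} yields the existence of $\lim_{t\to\infty}t\mu(t,Q^{(0)})$, completing the proof.

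The main obstacle I expect is purely notational bookkeeping: translating the language of classical pseudo-differential operators in Definition \ref{classical psdo def} into the explicit form $a^{1/2}b(x,D)c^{1/2}$ (or its self-adjointized variant) used by Birman--Solomyak, and verifying that the compact support and smoothness in Definition \ref{classical psdo def} suffice to meet the Schur multiplier condition. The analytic content (the $t\mu(t,\cdot)$ asymptotic) is not reproved; everything is funnelled into the Birman--Solomyak machine, with Lemma \ref{first good operator fact} used to absorb lower-order corrections.
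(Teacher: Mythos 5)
Your proposal matches the paper's argument essentially verbatim: both peel off the leading homogeneous term $q_{-d}\cdot(1-\phi)$, use the compact support of $Q$ to sandwich with $M_\psi$, take real parts to exploit self-adjointness (so the anti-self-adjoint pieces of the leading term and remainder cancel), show the lower-order remainder lies in $(\mathcal{L}_{1,\infty})_0$ via a Weyl-type bound, invoke Theorems~1 and~2 of \cite{Birman-Solomyak-vestnik-1977} for the leading piece, and finish with Lemma~\ref{first good operator fact}. No meaningful difference in route or content.
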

\begin{proof} By Definition \ref{classical psdo def}, we can find a smooth (except at $0$) homogeneous of degree $-d$ (in the second variable) function $q_{-d}:\mathbb{R}^d\times\mathbb{R}^d\to\mathbb{C},$ a smooth compactly supported function $\phi$ on $\mathbb{R}^d$ such that $\phi=1$ near $0$ and a pseudo-differential operator $R$ of order $-d-\epsilon$ such that
$$Q=T_{q_{-d}}\cdot (1-\phi)(\nabla)+R.$$
Since $Q$ is compactly supported, it follows that there exists smooth compactly supported function $\psi$ on $\mathbb{R}^d$ such that
$$Q=M_{\psi}QM_{\psi}=M_{\psi}T_{q_{-d}}(1-\phi)(\nabla)M_{\psi}+M_{\psi}RM_{\psi}.$$
Since $Q$ is self-adjoint, it follows that
$$Q=\Re(M_{\psi}T_{q_{-d}}(1-\phi)(\nabla)M_{\psi})+\Re(M_{\psi}RM_{\psi}).$$
Note that $P=R(1-\Delta)^{\frac{d+\epsilon}{2}}$ is a pseudo-differential operator of order $0.$ Hence, $M_{\psi}P$ is bounded.
$$M_{\psi}RM_{\psi}=M_{\psi}P\cdot (1-\Delta)^{-\frac{d+\epsilon}{2}}M_{\psi}\in\mathcal{L}_{\frac{d}{d+\epsilon},\infty}\subset(\mathcal{L}_{1,\infty})_0.$$
Theorem 2 in \cite{Birman-Solomyak-vestnik-1977} asserts the existence of the limit
$$\lim_{t\to\infty}t\mu\big(t,\Re(M_{\psi}T_{q_{-d}}(1-\phi)(\nabla)M_{\psi})\big).$$
The assertion follows now from Lemma \ref{first good operator fact}.	
\end{proof}

Now, we want a similar result for compact manifolds.

\begin{defi} Pseudo-differential operator $P:\mathcal{S}(X,g)\to\mathcal{S}(X,g)$ is called compactly supported in the chart $(U,\gamma)$ if there exists a smooth function $\phi$ compactly supported in this chart such that  $M_{\phi}PM_{\phi}=P.$
\end{defi}

\begin{defi} Pseudo-differential operator $P:\mathcal{S}(X,g)\to\mathcal{S}(X,g)$ is called classical if, for every chart $(U,\gamma)$ and for every smooth function $\phi$ compactly supported in this chart, the operator $M_{\phi}PM_{\phi}$ becomes classical when expressed in local coordinates.
\end{defi}

Let $(U,\gamma)$ be a chart. A linear mapping $W_{\gamma}:L_2(\mathbb{R}^d)\to L_2(X,{\rm vol}_g)$ defined by the formula
$$W_{\gamma}\xi=\chi_U\cdot ((\xi\cdot {\rm det}(g^{-\frac12}))\circ\gamma),\quad \xi\in L_2(\mathbb{R}^d),$$
is an isometry.

\begin{lem}\label{manifold asymptotic csup lemma} Let $(X,g)$ be a compact $d$-dimensional Riemannian manifold. Let $P$ be a classical pseudo-differential operator of order $-d$ on $(X,g).$ If $P$ is self-adjoint and compactly supported in the chart $(U,\gamma),$ then there exists a limit
$$\lim_{t\to\infty}t\mu(t,P).$$
\end{lem}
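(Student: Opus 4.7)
The plan is to conjugate $P$ by the isometry $W_\gamma$ introduced just above to obtain a self-adjoint compactly supported classical pseudodifferential operator $\tilde P$ of order $-d$ on $\mathbb R^d$ with $\mu(t,P)=\mu(t,\tilde P)$ for all $t\ge 0$, and then to apply Theorem \ref{birsol psdo theorem}. Two key ingredients are at play: first, that singular values are preserved under conjugation by an isometric embedding whose image contains the support of the operator; second, that the classical pseudodifferential structure is preserved under the coordinate change encoded in $W_\gamma$.

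Set $\tilde P:=W_\gamma^* P W_\gamma$ on $L_2(\mathbb R^d)$. Since $P$ is compactly supported in $(U,\gamma)$, we fix $\phi\in C_c^\infty(U)$ with $P=M_\phi P M_\phi$, so both the range of $P$ and the subspace on which $P$ is non-zero lie inside the range of the isometric embedding $W_\gamma$. Using $W_\gamma^* W_\gamma=I$ together with the fact that $W_\gamma W_\gamma^*$ acts as the identity on this range, one verifies $P=W_\gamma\tilde P W_\gamma^*$, whence $P^*P=W_\gamma(\tilde P^*\tilde P)W_\gamma^*$. Thus $\tilde P^*\tilde P$ is unitarily equivalent, via $W_\gamma$, to the restriction of $P^*P$ to the range of $W_\gamma$, so that $\mu(t,P)=\mu(t,\tilde P)$ for every $t\ge 0$. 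Self-adjointness of $\tilde P$ is immediate from $P=P^*$ and the identity $W_\gamma^{**}=W_\gamma$.

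By the definition of classical pseudodifferential operator on $(X,g)$, the operator $P=M_\phi P M_\phi$ becomes, when expressed in local coordinates via $\gamma$, a classical pseudodifferential operator of order $-d$ on $\mathbb R^d$ in the sense of Definition \ref{classical psdo def}. The conjugate $\tilde P$ differs from this bare coordinate expression only by multiplication on either side by the smooth positive density factor attached to $W_\gamma$, which preserves classicality and preserves the order. Moreover $\tilde P=M_{\phi\circ\gamma^{-1}}\tilde P M_{\phi\circ\gamma^{-1}}$ with $\phi\circ\gamma^{-1}\in C_c^\infty(\mathbb R^d)$, so $\tilde P$ is compactly supported on $\mathbb R^d$. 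Theorem \ref{birsol psdo theorem} applied to the self-adjoint, classical, compactly supported PDO $\tilde P$ of order $-d$ then yields the limit $\lim_{t\to\infty}t\mu(t,\tilde P)$, which coincides with $\lim_{t\to\infty}t\mu(t,P)$ by the singular-value identity above.

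The main obstacle is the third paragraph: carefully verifying that $\tilde P$ fits Definition \ref{classical psdo def} on $\mathbb R^d$ (in particular that the homogeneous components and the finite Taylor tail interact correctly with the density twist in $W_\gamma$). This rests on the standard invariance of the classical symbol calculus under diffeomorphisms and under multiplication by smooth positive functions; once accepted, the rest is a routine manipulation of the isometry $W_\gamma$ and singular value comparison.
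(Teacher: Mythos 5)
Your proposal is correct and follows essentially the same route as the paper: both conjugate $P$ by the isometry $W_\gamma$, observe that singular values are preserved because $P$ is supported inside the range of $W_\gamma$, note that $Q=W_\gamma^*PW_\gamma$ is again a self-adjoint, classical, compactly supported pseudodifferential operator of order $-d$ on $\mathbb{R}^d$, and then invoke Theorem \ref{birsol psdo theorem}. You supply somewhat more detail on the singular-value identity and on the preservation of classicality under the density twist, which the paper leaves as "immediate," but the argument is the same.
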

\begin{proof} It is immediate that $\mu(P)=\mu(Q),$ where $Q=W_{\gamma}^{\ast}PW_{\gamma}.$ Since $P$ is a pseudo-differential operator of order $-d$ compactly supported in the chart $(U,\gamma),$ it follows that $Q$ is a compactly supported pseudo-differential operator of order $-d$ on $\mathbb{R}^d.$ $P$ is classical, hence so is $Q.$ The assertion follows now from Theorem \ref{birsol psdo theorem}.
\end{proof}

\begin{lem}\label{manifold asymptotic main lemma} Let $(X,g)$ be a compact $d$-dimensional Riemannian manifold. Let $P$ be a classical pseudo-differential operator of order $-d$ on $(X,g).$ If $P$ is self-adjoint, then there exists a limit
$$\lim_{t\to\infty}t\mu(t,P).$$
\end{lem}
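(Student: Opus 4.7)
The plan is to globalize Lemma \ref{manifold asymptotic csup lemma} by means of a partition of unity on $X$. Fix a finite atlas $\{(U_k,\gamma_k)\}_{k=1}^{N}$ and choose smooth nonnegative cutoffs $\chi_1,\dots,\chi_N$ with $\mathrm{supp}(\chi_k)\subset U_k$ and $\sum_{k=1}^{N}\chi_k^2=1$ (one obtains these by normalizing a smooth subordinate partition of unity $\{\eta_k\}$ via $\chi_k:=\eta_k/(\sum_j\eta_j^2)^{1/2}$). Because $P$ is a classical $\Psi$DO of order $-d$ and each $\chi_k$ is smooth, each double commutator $[M_{\chi_k},[M_{\chi_k},P]]$ is a classical $\Psi$DO of order $-d-2$, so it belongs to $\mathcal{L}_{d/(d+2),\infty}\subset\mathcal{L}_{1}\subset(\mathcal{L}_{1,\infty})_0$. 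Summing the algebraic identity $[M_{\chi_k},[M_{\chi_k},P]]=M_{\chi_k^2}P-2M_{\chi_k}PM_{\chi_k}+PM_{\chi_k^2}$ over $k$ and using $\sum_k\chi_k^2=1$ produces
$$P=\sum_{k=1}^{N}T_k+R,\qquad T_k:=M_{\chi_k}PM_{\chi_k},\qquad R\in(\mathcal{L}_{1,\infty})_0.$$
In view of Lemma \ref{first good operator fact}, it suffices to produce a limit for $\sum_k T_k$.

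Each $T_k$ is a self-adjoint classical $\Psi$DO of order $-d$ compactly supported in the chart $U_k$, so Lemma \ref{manifold asymptotic csup lemma} supplies $\alpha_k:=\lim_{t\to\infty}t\mu(t,T_k)$. To package these into a single limit I would introduce the isometric embedding $J:L_2(X)\to\bigoplus_{k=1}^{N}L_2(X)$ given by $J\xi=(\chi_k\xi)_k$ (isometric exactly because $\sum_k\chi_k^2=1$). The lifted operator $JPJ^*$ has block-matrix representation $[M_{\chi_j}PM_{\chi_k}]_{j,k}$ whose block diagonal is $\bigoplus_k T_k$; since $JPJ^*$ vanishes on $\mathrm{Image}(J)^\perp$ and is unitarily equivalent to $P$ on $\mathrm{Image}(J)$, we have $\mu(JPJ^*)=\mu(P)$. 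Lemma \ref{second good operator fact} yields $\lim_{t\to\infty}t\mu(t,\bigoplus_k T_k)=\sum_k\alpha_k$, so the theorem reduces, via Lemma \ref{first good operator fact}, to showing that the off-diagonal block $E:=JPJ^*-\bigoplus_k T_k$ belongs to $(\mathcal{L}_{1,\infty})_0$.

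The main obstacle is exactly this estimate $E\in(\mathcal{L}_{1,\infty})_0$. A generic block $M_{\chi_j}PM_{\chi_k}$ is itself a classical $\Psi$DO of order $-d$ lying in $\mathcal{L}_{1,\infty}\setminus(\mathcal{L}_{1,\infty})_0$, so cancellations between blocks must be exploited. For pairs $(j,k)$ with $\mathrm{supp}(\chi_j)\cap\mathrm{supp}(\chi_k)=\emptyset$, pseudolocality makes the block smoothing and therefore in $\mathcal{L}_1\subset(\mathcal{L}_{1,\infty})_0$. For overlapping pairs I would first refine the atlas (via a Lebesgue-number argument applied to a sufficiently fine Riemannian ball cover) so that $\mathrm{supp}(\chi_j)\cup\mathrm{supp}(\chi_k)$ is always contained in a single chart, and then apply a second round of the double-commutator identity inside that common chart to absorb each symmetric off-diagonal pair $M_{\chi_j}PM_{\chi_k}+M_{\chi_k}PM_{\chi_j}$ into a diagonal-type term modulo $(\mathcal{L}_{1,\infty})_0$. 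Assembling the resulting contributions delivers $E\in(\mathcal{L}_{1,\infty})_0$ and closes the argument.
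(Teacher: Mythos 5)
Your first reduction is fine: the double-commutator identity
$$\sum_{k}[M_{\chi_k},[M_{\chi_k},P]]=2P-2\sum_kM_{\chi_k}PM_{\chi_k}$$
cleanly shows $P-\sum_kT_k\in(\mathcal{L}_{1,\infty})_0$, since each $[M_{\chi_k},[M_{\chi_k},P]]$ has order $-d-2$. The gap is in the second reduction, where you try to trade the genuine operator sum $\sum_kT_k$ for the direct sum $\bigoplus_kT_k$ via $J$, and to do so you need $E:=JPJ^*-\bigoplus_kT_k\in(\mathcal{L}_{1,\infty})_0$. That claim is false for a genuine partition of unity. The operator $E$ is a self-adjoint, classical, matrix-valued $\Psi$DO of order exactly $-d$ whose principal symbol is
$$\sigma_E(x,\xi)=p_{-d}(x,\xi)\Big(\chi(x)\chi(x)^T-\mathrm{diag}(\chi_k^2(x))\Big),$$
and since $\sum_k\chi_k^2\equiv 1$ forces the cutoffs to overlap, the off-diagonal matrix $\chi\chi^T-\mathrm{diag}(\chi_k^2)$ is not identically zero; already for $N=2$ its eigenvalues are $\pm\chi_1\chi_2$. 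Consequently $E$ has a nonvanishing noncommutative residue of its absolute value, so $\limsup_{t\to\infty}t\,\mu(t,E)>0$, i.e. $E\notin(\mathcal{L}_{1,\infty})_0$. Note that $\lim t\mu(t,JPJ^*)=\lim t\mu(t,\bigoplus_kT_k)$ being eventually true for the \emph{limits} does not make the \emph{difference} small in $\mathcal{L}_{1,\infty}$; these are not equivalent. Your ``second round of double-commutators'' does not remove this obstruction either: absorbing the symmetric pair $M_{\chi_j}PM_{\chi_k}+M_{\chi_k}PM_{\chi_j}$ into ``diagonal-type'' terms still leaves behind a $\Psi$DO of order $-d$ with a nonzero symbol in the off-diagonal slot of the block operator, because $\chi_j\chi_k\not\equiv 0$; the problem is not the choice of chart but the unavoidable overlap of a true partition of unity.

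The paper avoids this precisely by \emph{not} using a partition of unity. It takes a sequence $(\phi_{n,k})_{k}$ of nonnegative cutoffs whose supports are \emph{separated} for each fixed $n$ (so $\sum_kM_{\phi_{n,k}}PM_{\phi_{n,k}}$ is literally a direct sum and there are no off-diagonal blocks at all), while the cross terms $M_{\phi_{n,k_1}}PM_{\phi_{n,k_2}}$, $k_1\ne k_2$, are of order $\le -d-1$ via $[M_{\phi_{n,k_1}},P]M_{\phi_{n,k_2}}$ and hence in $(\mathcal{L}_{1,\infty})_0$. The price is that $\phi_n=\sum_k\phi_{n,k}$ is not $1$; this is recovered by letting $\phi_n\to 1$ in $L_2$ and using the Cwikel--Solomyak estimate to get $M_{\phi_n}PM_{\phi_n}\to P$ in $\mathcal{L}_{1,\infty}$, after which Lemma~\ref{limit of good operators mu lemma} passes the existence of the limit through the $\mathcal{L}_{1,\infty}$-convergence. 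If you want to salvage your scheme, you would need a genuinely different mechanism to control the off-diagonal block $E$ --- for instance, replacing the single partition by an $n$-dependent family whose overlaps shrink, at which point you are essentially forced back onto the paper's approximation argument.
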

\begin{proof} Fix a finite atlas on $(X,g).$ Choose a sequence $(\phi_{n,k})_{\substack{1\leq k\leq K\\ n\in\mathbb{Z}}}\subset C^{\infty}(X)$ such that
\begin{enumerate} 
\item we have
$$\phi_n\stackrel{def}{=}\sum_{1\leq k\leq K}\phi_{n,K}\to 1$$
in $L_2(X,{\rm vol}_g).$
\item for every $n\in\mathbb{Z}$ and for every $1\leq k\leq K,$  $\phi_{n,k}$ is compactly supported in some chart.
\item for every $n\in\mathbb{Z},$ the sets $({\rm supp}(\phi_{n,k}))_{1\leq k\leq K}$ are separated.
\end{enumerate}
	
For every $n\in\mathbb{Z},$ we write
$$M_{\phi_n}PM_{\phi_n}=\sum_{1\leq k\leq K}M_{\phi_{n,k}}PM_{\phi_{n,k}}+\sum_{\substack{1\leq k_1,k_2\leq K\\ k_1\neq k_2}}M_{\phi_{n,k_1}}PM_{\phi_{n,k_2}}\stackrel{def}{=}A_n+B_n.$$
	
If $k_1\neq k_2,$ then
$$M_{\phi_{n,k_1}}PM_{\phi_{n,k_2}}=[M_{\phi_{n,k_1}},P]M_{\phi_{n,k_2}}$$
is a pseudo-differential operator of order $-d-1.$ In particular, it belongs to $\mathcal{L}_{\frac{d}{d+1},\infty}.$ Thus, $B_n\in(\mathcal{L}_{1,\infty})_0.$

On the other hand, we have
$$\mu(A_n)=\mu\Big(\bigoplus_{1\leq k\leq K}M_{\phi_{n,k}}PM_{\phi_{n,k}}\Big).$$
Each operator $M_{\phi_{n,k}}PM_{\phi_{n,k}}$ is self-adjoint, classical and compactly supported in some chart. By Lemma \ref{manifold asymptotic csup lemma}, there exists a limit
$$\lim_{t\to\infty}t\mu(t,M_{\phi_{n,k}}PM_{\phi_{n,k}}).$$
By Fact \ref{second good operator fact}, there exists a limit
\begin{equation}\label{maml eq2}
\lim_{t\to\infty}t\mu(t,A_n).
\end{equation}
Recall that  $B_n\in(\mathcal{L}_{1,\infty})_0.$ Combining \eqref{maml eq2} and Corollary \ref{first good operator fact}, we infer the existence of the limit
\begin{equation}\label{maml eq3}
\lim_{t\to\infty}t\mu(t,M_{\phi_n}PM_{\phi_n}).
\end{equation}
	
Now,
$$\|P-M_{\phi_n}PM_{\phi_n}\|_{1,\infty}\leq 2\|M_{1-\phi_n}P\|_{1,\infty}+2\|PM_{1-\phi_n}\|_{1,\infty}\leq $$
$$\leq 2\Big\|M_{1-\phi_n}(1-\Delta_g)^{-\frac{d}{2}}\Big\|_{1,\infty}\cdot\Big(\Big\|(1-\Delta_g)^{\frac{d}{2}}P\Big\|_{\infty}+\Big\|P(1-\Delta_g)^{\frac{d}{2}}\Big\|_{\infty}\Big)\leq$$
$$\leq c_{X,g}\|1-\phi_n\|_2.$$
Thus, $M_{\phi_n}PM_{\phi_n}\to P$ in $\mathcal{L}_{1,\infty}.$ The assertion follows now by combining \eqref{maml eq3} and Corollary \ref{limit of good operators corollary}.
\end{proof}

The proof of the next lemma is based on a deep result from \cite{DFWW}.

\begin{lem}\label{bounded cif manifold} Let $(X,g)$ be a compact $d$-dimensional Riemannian manifold and let $\Delta_g$ be the Laplace-Beltrami operator. For every $f\in L_{\infty}(X,{\rm vol}_g)$ and for every normalised continuous trace on $\mathcal{L}_{1,\infty},$ we have
$$\varphi((1-\Delta_g)^{-\frac{d}{4}}M_f(1-\Delta_g)^{-\frac{d}{4}})=\frac{{\rm Vol}(\mathbb{S}^{d-1})}{d(2\pi)^d}\int_X fd{\rm vol}_g.$$
\end{lem}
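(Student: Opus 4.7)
The plan is a three-step reduction: first from $f\in L_\infty(X,{\rm vol}_g)$ to $f\in C^\infty(X)$, then to the sign-definite case $f\geq 0$, and finally identifying the common value of $\varphi(P)$ via the Dykema--Figiel--Weiss--Wodzicki theorem coupled with the classical Connes Integration Formula that is already recorded in the literature.

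For the first step, I would take $f_n\in C^\infty(X)$ with $\|f_n\|_\infty\leq\|f\|_\infty$ and $f_n\to f$ almost everywhere (standard mollification in charts, patched by a partition of unity). Because $|f_n-f|^2$ is uniformly bounded and tends to $0$ a.e., dominated convergence applied to the Orlicz functional $M$ yields $\|f_n-f\|_{L_M^{(2)}(X,{\rm vol}_g)}\to 0$. Combining Theorem \ref{solomyak cwikel estimate manifold}, the H\"older-type estimate \eqref{weak holder}, and $(1-\Delta_g)^{-d/4}\in\mathcal{L}_{2,\infty}$ (which follows from the Weyl asymptotics $\lambda_n(\Delta_g)\sim cn^{2/d}$), we obtain
$$\Big\|(1-\Delta_g)^{-d/4}M_{f_n-f}(1-\Delta_g)^{-d/4}\Big\|_{1,\infty}\leq C_{X,g}\|f_n-f\|_{L_M^{(2)}}\longrightarrow 0.$$
Continuity of $\varphi$ on $\mathcal{L}_{1,\infty}$ together with $\int_X f_n\,d{\rm vol}_g\to\int_X f\,d{\rm vol}_g$ reduces the problem to $f\in C^\infty(X)$. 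Writing $f=g_1-g_2$ with $g_1=f+\|f\|_\infty$ and $g_2=\|f\|_\infty$ (both smooth and non-negative), linearity of $\varphi$ and of the integral lets us further assume $f\in C^\infty(X)$ with $f\geq 0$.

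In that reduced case, $P=(1-\Delta_g)^{-d/4}M_f(1-\Delta_g)^{-d/4}$ is a positive self-adjoint classical pseudo-differential operator of order $-d$ on $X$, so Lemma \ref{manifold asymptotic main lemma} supplies the existence of $\alpha(f):=\lim_{t\to\infty}t\mu(t,P)$. The DFWW description of the commutator subspace of $\mathcal{L}_{1,\infty}$ from \cite{DFWW} asserts that for any positive $T\in\mathcal{L}_{1,\infty}$ with $\lim_{t\to\infty}t\mu(t,T)=\alpha$, the element $T-\alpha D$ lies in $[B(H),\mathcal{L}_{1,\infty}]$ for a fixed diagonal operator $D$ with $\mu(D)(n)=1/(n+1)$; hence every continuous normalised trace $\varphi$ evaluates $T$ to $\alpha$. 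Applied to $T=P$, this gives $\varphi(P)=\alpha(f)$ independently of $\varphi$. To identify $\alpha(f)$, pick any Dixmier trace $\varphi_0$: the classical Connes Integration Formula recalled in the introduction (Theorem 11.7.10 of \cite{LSZ-book}) yields $\varphi_0(P)=\frac{{\rm Vol}(\mathbb{S}^{d-1})}{d(2\pi)^d}\int_X f\,d{\rm vol}_g$ for smooth $f$, so $\alpha(f)$ equals this constant.

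The main obstacle is the DFWW step, that is, upgrading the asymptotic $t\mu(t,P)\to\alpha(f)$ supplied by Lemma \ref{manifold asymptotic main lemma} into the equality $\varphi(P)=\alpha(f)$ valid for \emph{every} continuous normalised trace rather than just Dixmier ones. This requires the full characterisation of the commutator subspace via Cesaro sums of singular values from \cite{DFWW} and cannot be extracted from the softer statement that all Dixmier traces agree on $P$.
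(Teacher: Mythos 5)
Your proof is correct in outline but takes a genuinely different and heavier route than the paper's. The paper's argument is a one-liner once the right fact is isolated: writing $A=(1-\Delta_g)^{-d/4}$ and $B=M_f(1-\Delta_g)^{-d/4}$, both in $\mathcal{L}_{2,\infty}$ by Theorem \ref{solomyak cwikel estimate manifold}, it observes that $[A,B]\in[\mathcal{L}_{2,\infty},\mathcal{L}_{2,\infty}]$, and then invokes the Dykema--Figiel--Weiss--Wodzicki identity $[\mathcal{L}_{2,\infty},\mathcal{L}_{2,\infty}]=[\mathcal{L}_{1,\infty},\mathcal{L}_{\infty}]$; since every trace on $\mathcal{L}_{1,\infty}$ kills $[\mathcal{L}_{1,\infty},\mathcal{L}_{\infty}]$, one gets $\varphi(AB)=\varphi(BA)=\varphi(M_f(1-\Delta_g)^{-d/2})$, and Theorem 11.7.10 of \cite{LSZ-book} (which already covers bounded, indeed $L_2$, symbols) finishes directly. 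No approximation by smooth functions, no reduction to sign-definite symbols, no pseudo-differential asymptotics, and no Tauberian theorem are needed. Your route, by contrast, first approximates $f\in L_\infty$ in $L_M^{(2)}$, reduces to $f\in C^\infty$ and $f\geq 0$, then invokes the heaviest tool in the paper (Lemma \ref{manifold asymptotic main lemma}, i.e.\ the Birman--Solomyak spectral asymptotics for classical pseudo-differential operators) to extract the exact limit $\alpha(f)=\lim_t t\mu(t,P)$, and finally identifies $\alpha(f)$ via the classical CIF for a Dixmier trace. This works, but it front-loads machinery that the paper deliberately postpones: in the paper, Lemma \ref{bounded cif manifold} is cheap and self-contained, and the pseudo-differential asymptotics enter only at the level of Lemma \ref{cif bounded positive lemma}, where your argument essentially belongs.

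Two smaller points worth flagging. First, your statement of the DFWW step is imprecise: it is not generally true that a positive $T$ with $\lim_n n\mu(n,T)=\alpha$ satisfies $T-\alpha D\in[B(H),\mathcal{L}_{1,\infty}]$ for a fixed diagonal $D$ with $\mu(n,D)=1/(n+1)$; what one actually gets is $T-\alpha D'\in(\mathcal{L}_{1,\infty})_0$ when $D'$ is taken diagonal in the eigenbasis of $T$ (since the eigenvalue difference is $o(1/n)$), and continuity of $\varphi$ is then what lets you conclude $\varphi(T)=\alpha$. Your final sentence correctly signals that continuity is essential, but the claimed membership in the commutator subspace is not what you use. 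Second, when you ``pick any Dixmier trace $\varphi_0$'' and apply the CIF to the symmetric operator $P$, you are implicitly using that $\varphi_0(AB)=\varphi_0(BA)$ for $A,B\in\mathcal{L}_{2,\infty}$, which is again exactly the DFWW commutator-subspace identity that the paper uses; for smooth $f$ one can instead observe that $[M_f,(1-\Delta_g)^{-d/4}]$ has order $-d/2-1$ and lives in $\mathcal{L}_1$, so the trace of the commutator term vanishes trivially, but this deserves a sentence rather than being left implicit.
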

\begin{proof} Let 
$$A=(1-\Delta_g)^{-\frac{d}{4}},\quad B=M_f(1-\Delta_g)^{-\frac{d}{4}}.$$
By Theorem \ref{solomyak cwikel estimate manifold}, we have $A,B\in\mathcal{L}_{2,\infty}.$ In particular, $AB,BA\in\mathcal{L}_{1,\infty}$ and
$$[A,B]\stackrel{def}{=}AB-BA\in [\mathcal{L}_{2,\infty},\mathcal{L}_{2,\infty}].$$
Here, the notation $[\mathcal{I},\mathcal{J}]$ stands for the linear span of all commutators $[X,Y],$ $X\in\mathcal{I},$ $Y\in\mathcal{J}.$ It is a deep result, proved in \cite{DFWW} (see e.g. p.3 there) that
$$[\mathcal{L}_{2,\infty},\mathcal{L}_{2,\infty}]=[\mathcal{L}_{1,\infty},\mathcal{L}_{\infty}].$$
Since $\varphi$ vanishes on $[\mathcal{L}_{1,\infty},\mathcal{L}_{\infty}],$ it follows that $\varphi$ vanishes on $[\mathcal{L}_{2,\infty},\mathcal{L}_{2,\infty}].$ In particular, $\varphi([A,B])=0.$
	
Hence,
$$\varphi((1-\Delta_g)^{-\frac{d}{4}}M_f(1-\Delta_g)^{-\frac{d}{4}})=\varphi(AB)=$$
$$=\varphi(BA)=\varphi(M_f(1-\Delta_g)^{-\frac{d}{2}}).$$
The assertion follows now from Theorem 11.7.10 in \cite{LSZ-book}.
\end{proof}

Based on Lemma \ref{manifold asymptotic main lemma} and Lemma \ref{bounded cif manifold}, we are able to prove Theorem \ref{cif manifold} for positive bounded functions.

\begin{lem}\label{cif bounded positive lemma} Let $(X,g)$ be a compact $d$-dimensional Riemannian manifold. Let $0\leq f\in L_{\infty}(X,{\rm vol}_g).$ We have
$$\lim_{t\to\infty}t\mu\Big(t,(1-\Delta_g)^{-\frac{d}{4}}M_f(1-\Delta_g)^{-\frac{d}{4}}\Big)=\frac{{\rm Vol}(\mathbb{S}^{d-1})}{d(2\pi)^d}\int_X fd{\rm vol}_g.$$
\end{lem}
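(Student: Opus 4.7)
The plan is to bootstrap from Lemmas \ref{manifold asymptotic main lemma} and \ref{bounded cif manifold}: first prove the formula when $f$ is smooth by combining existence of the eigenvalue limit (pseudo-differential input) with identification of that limit through any continuous normalised trace (integration formula input), and then extend to $f\in L_\infty$ by $\mathcal{L}_{1,\infty}$-approximation.

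For smooth $f\ge 0$ the operator
$$T := (1-\Delta_g)^{-\frac{d}{4}} M_f (1-\Delta_g)^{-\frac{d}{4}}$$
is a self-adjoint classical pseudo-differential operator of order $-d$ on $(X,g)$, as the composition of a classical self-adjoint $\psi$do of order $-d/2$ (namely $(1-\Delta_g)^{-d/4}$) with multiplication by a smooth function. Lemma \ref{manifold asymptotic main lemma} then yields the existence of $\alpha := \lim_{t\to\infty} t\mu(t,T)$. To identify $\alpha$ I would fix any normalised continuous trace $\varphi$ on $\mathcal{L}_{1,\infty}$ (for instance a Dixmier trace). Because $T\ge 0$ and $t\mu(t,T)\to \alpha$, a direct calculation gives $\mu(T)-\alpha z \in (\ell_{1,\infty})_0$ where $z(n)=1/(n+1)$; spectrally diagonalising $T$ and using that $\varphi$ annihilates $(\mathcal{L}_{1,\infty})_0$ while being normalised so that its value on ${\rm diag}(z)$ equals $1$ yields $\varphi(T)=\alpha$. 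On the other hand Lemma \ref{bounded cif manifold} gives $\varphi(T)=C_d\int_X f\,d{\rm vol}_g$ with $C_d := {\rm Vol}(\mathbb{S}^{d-1})/(d(2\pi)^d)$, so $\alpha = C_d\int_X f\,d{\rm vol}_g$.

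For general bounded $f\ge 0$ I would mollify in local charts and paste via a partition of unity to produce smooth $f_n\ge 0$ with $\|f_n\|_\infty\le \|f\|_\infty$ and $f_n\to f$ in $L_M^{(2)}(X,{\rm vol}_g)$; such a sequence exists since $M$ satisfies the $\Delta_2$-condition and hence dominated convergence holds in the Orlicz space. Setting $T_n:=(1-\Delta_g)^{-d/4}M_{f_n}(1-\Delta_g)^{-d/4}$, the weak H\"older inequality \eqref{weak holder} combined with Theorem \ref{solomyak cwikel estimate manifold} (applied once to the constant $1$ and once to $f-f_n$) produces
$$\|T-T_n\|_{1,\infty}\le c\,\big\|(1-\Delta_g)^{-\frac{d}{4}}\big\|_{2,\infty}\cdot\big\|M_{f-f_n}(1-\Delta_g)^{-\frac{d}{4}}\big\|_{2,\infty}\le c_{X,g}\|f-f_n\|_{L_M^{(2)}}\to 0.$$
The smooth case gives $\lim_{t\to\infty}t\mu(t,T_n)=C_d\int f_n\,d{\rm vol}_g$ for each $n$, so Lemma \ref{limit of good operators mu lemma} applied to $T_n\to T$ in $\mathcal{L}_{1,\infty}$ simultaneously produces existence of $\lim_{t\to\infty}t\mu(t,T)$ and identifies it with $\lim_n C_d\int f_n\,d{\rm vol}_g = C_d\int f\,d{\rm vol}_g$, the last step by $L_1$-convergence of $f_n$ to $f$ on the finite measure space.

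I expect the main obstacle to lie in the identification $\varphi(T)=\alpha$ in the smooth case: while it is a standard consequence of singular-trace theory, it demands careful handling of the normalisation of $\varphi$ and of the membership $\mu(T)-\alpha z\in(\ell_{1,\infty})_0$, so that continuity of $\varphi$ absorbs the error. By contrast, the pseudo-differential step (verifying $T$ is classical of order $-d$) and the $\mathcal{L}_{1,\infty}$-approximation are routine applications of the tools already assembled in Sections 2--4.
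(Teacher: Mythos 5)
Your proof is correct and follows essentially the same route as the paper: both arguments rest on the same four ingredients (existence of the eigenvalue limit for classical pseudo-differential operators via Lemma~\ref{manifold asymptotic main lemma}, the trace identity of Lemma~\ref{bounded cif manifold}, the Cwikel--Solomyak estimate of Theorem~\ref{solomyak cwikel estimate manifold}, and the stability result Lemma~\ref{limit of good operators mu lemma}). The only cosmetic difference is the order of operations: you identify the limit for smooth $f$ first and then pass to bounded $f$ by approximation, whereas the paper approximates first and performs the trace identification once on the limiting operator $T$ (which lets it drop your requirement that the smooth approximants be nonnegative, since only $T\geq0$ is needed).
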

\begin{proof} Choose a sequence $(f_n)_{n\geq0}\subset C^{\infty}(X,\mathbb{R})$ such that $f_n\to f$ in $L_M(X,{\rm vol}_g).$ By Theorem \ref{solomyak cwikel estimate manifold}, we have
$$(1-\Delta_g)^{-\frac{d}{4}}M_{f_n}(1-\Delta_g)^{-\frac{d}{4}}\to (1-\Delta_g)^{-\frac{d}{4}}M_f(1-\Delta_g)^{-\frac{d}{4}}$$
in $\mathcal{L}_{1,\infty}$ as $n\to\infty.$ Operators
$$(1-\Delta_g)^{-\frac{d}{4}}M_{f_n}(1-\Delta_g)^{-\frac{d}{4}},\quad n\geq0,$$
are self-adjoint and pseudo-differential. By Lemma \ref{manifold asymptotic main lemma}, there exists a limit
$$\lim_{t\to\infty}t\mu\Big(t,(1-\Delta_g)^{-\frac{d}{4}}M_{f_n}(1-\Delta_g)^{-\frac{d}{4}}\Big)=\alpha_n.$$
By Lemma \ref{limit of good operators mu lemma}, there exists a limit
$$\lim_{t\to\infty}t\mu\Big(t,(1-\Delta_g)^{-\frac{d}{4}}M_f(1-\Delta_g)^{-\frac{d}{4}}\Big)=\alpha.$$

Hence, for every continuous normalised trace $\varphi$ on $\mathcal{L}_{1,\infty},$ we have
$$\varphi\Big((1-\Delta_g)^{-\frac{d}{4}}M_f(1-\Delta_g)^{-\frac{d}{4}}\Big)=\alpha.$$
The assertion follows now from Lemma \ref{bounded cif manifold}.
\end{proof}

The proof of the next lemma exploits recent advances in Birman-Koplienko-Solomyak inequality for quasi-Banach ideals.

\begin{lem}\label{positive part abstract lemma} Let $A\in\mathcal{L}_{2,\infty}$ and $B\in\mathcal{L}_{\infty}$ be self-adjoint elements. If $[A,B]\in(\mathcal{L}_{2,\infty})_0,$ then
$$(ABA)_+-AB_+A\in(\mathcal{L}_{1,\infty})_0.$$
\end{lem}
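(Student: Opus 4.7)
The plan is to reduce the statement to an estimate on $|ABA| - A|B|A$ and then invoke the Hölder-$\tfrac{1}{2}$ majorization of \cite{HSZ} already used in Lemmas \ref{holder lemma} and \ref{second holder lemma}. The starting identity is
$$(ABA)_+ - AB_+A = \tfrac{1}{2}\big(|ABA| - A|B|A\big),$$
which follows at once from $(ABA)_+ = \tfrac{1}{2}(ABA + |ABA|)$ together with $AB_+A = \tfrac{1}{2}(ABA + A|B|A)$. It therefore suffices to show $|ABA| - A|B|A \in (\mathcal{L}_{1,\infty})_0$.

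The first step is a general commutator fact: under the hypothesis $A \in \mathcal{L}_{2,\infty}$ and $[A,B] \in (\mathcal{L}_{2,\infty})_0$, one has $[A, f(B)] \in (\mathcal{L}_{2,\infty})_0$ for every continuous $f$ on $\sigma(B)$. Indeed, the Leibniz identity $[A, B^n] = \sum_{k=0}^{n-1} B^k[A,B]B^{n-1-k}$ handles polynomials, and the uniform bound $\|[A, X]\|_{2,\infty} \leq 2\|A\|_{2,\infty}\|X\|_\infty$ combined with Weierstrass approximation and the closedness of $(\mathcal{L}_{2,\infty})_0$ extends this to arbitrary continuous $f$. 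In particular, $[A, B_\pm] \in (\mathcal{L}_{2,\infty})_0$.

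Next, using $B = B_+ - B_-$, $|B| = B_+ + B_-$ and $B_+B_- = 0$, a direct expansion gives
$$(ABA)^2 - (A|B|A)^2 = -2(PQ + QP),\quad P := AB_+A,\ Q := AB_-A.$$
Rewriting $PQ = AB_+A^2B_-A$ and using $AB_- = B_-A + [A,B_-]$ twice to trigger the cancellation $B_+B_- = 0$ yields
$$PQ = AB_+[A,B_-]A^2 + AB_+A[A,B_-]A,$$
and similarly for $QP$. Each summand carries a factor $[A, B_\pm] \in (\mathcal{L}_{2,\infty})_0$; applying Hölder's inequality \eqref{weak holder} together with the elementary principle $(\mathcal{L}_{p,\infty})_0 \cdot \mathcal{L}_{q,\infty} \subset (\mathcal{L}_{r,\infty})_0$ for $\tfrac{1}{r}=\tfrac{1}{p}+\tfrac{1}{q}$ (proved by approximating the $(\cdot)_0$ factor by finite-rank operators), each summand lies in $(\mathcal{L}_{1/2,\infty})_0$. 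Hence $(ABA)^2 - (A|B|A)^2 \in (\mathcal{L}_{1/2,\infty})_0$.

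The final step applies equation (7) of \cite{HSZ} to the positive operators $T = (ABA)^2$ and $S = (A|B|A)^2$:
$$\big||ABA| - A|B|A\big|^{1/2} \prec\prec c_{\mathrm{abs}}\big|(ABA)^2 - (A|B|A)^2\big|^{1/4}.$$
Because $\mu(k, (ABA)^2-(A|B|A)^2) = o(k^{-2})$, summing to index $n$ gives $\sum_{k=0}^n \mu(k, |ABA|-A|B|A)^{1/2} = o(n^{1/2})$, and monotonicity of singular values then forces $\mu(n, |ABA|-A|B|A) = o(n^{-1})$, i.e. $|ABA| - A|B|A \in (\mathcal{L}_{1,\infty})_0$. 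The main technical point to watch throughout is the preservation of the $(\cdot)_0$ structure through the Hölder products, since $(\mathcal{L}_{p,\infty})_0$ is strictly smaller than $\mathcal{L}_{p,\infty}$ and $A$ itself is not assumed to lie in $(\mathcal{L}_{2,\infty})_0$; the decisive cancellation $B_+B_- = 0$ is what pulls a commutator $[A,B_\pm]$ — which does lie in the separable part — out of each expression.
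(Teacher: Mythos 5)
Your proof is correct and reaches the same goal by the same overall architecture as the paper: reduce $(ABA)_+ - AB_+A$ to $\tfrac12(|ABA| - A|B|A)$, show that $(ABA)^2 - (A|B|A)^2 \in (\mathcal{L}_{1/2,\infty})_0$, and lift this through a H\"older-$\tfrac12$ (square-root) estimate from \cite{HSZ}. The differences are in how the pieces are supplied. For the commutator ingredient, the paper obtains $[A,|B|]\in(\mathcal{L}_{2,\infty})_0$ by citing Theorem~3.4 of \cite{DDPS1}, whereas you give a self-contained polynomial/Weierstrass argument yielding $[A,f(B)]\in(\mathcal{L}_{2,\infty})_0$ for all continuous $f$; both are sound, and yours has the virtue of not depending on \cite{DDPS1}. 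For the algebraic expansion, the paper writes $(ABA)^2-(A|B|A)^2$ directly as a sum of four terms each carrying a factor $[A,B]$ or $[A,|B|]$; you instead pass to $B=B_+-B_-$, exploit $B_+B_-=0$ to reduce to $-2(PQ+QP)$ with $P=AB_+A$, $Q=AB_-A$, and pull out factors $[A,B_\pm]$. Both expansions land in $(\mathcal{L}_{1/2,\infty})_0$, though yours requires the extra observation that the separable part is preserved under the weak H\"older products, which you state and justify correctly. For the final step the paper invokes Theorem~6.3 of \cite{HSZ} (the Birman--Koplienko--Solomyak inequality for quasi-Banach ideals) with $E=(\mathcal{L}_{2,\infty})_0$, while you apply equation~(7) of \cite{HSZ} and repeat the Hardy--Littlewood summation trick already used in Lemma~\ref{second holder lemma}; these are interchangeable. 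In short, a correct proof following the paper's route, with somewhat more elementary and self-contained substitutes for two of its citations.
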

\begin{proof} We have 
\begin{align*}
|ABA|^2-(A|B|A)^2&=(ABA)^2-(A|B|A)^2\\&
=A\cdot BA\cdot AB\cdot A- A\cdot |B|A\cdot A|B|\cdot A\\&
=A\cdot[B,A]\cdot AB\cdot A+A\cdot AB\cdot [A,B]\cdot A\\&
-A\cdot [|B|,A]\cdot A|B|\cdot A-A\cdot A|B|\cdot [A,|B|]\cdot A.
\end{align*}

It follows from the assumption $[A,B]\in(\mathcal{L}_{2,\infty})_0$ and from Theorem 3.4 in \cite{DDPS1} that $[A,|B|]\in(\mathcal{L}_{2,\infty})_0.$ By H\"older inequality, we have
$$|ABA|^2-(A|B|A)^2\in(\mathcal{L}_{\frac12,\infty})_0.$$
Now recall the Birman-Koplienko-Solomyak inequality for quasi-Banach ideals (see Theorem 6.3 in \cite{HSZ}). Applying the latter theorem with $E=(\mathcal{L}_{2,\infty})_0,$ $f(t)=|t|^{\theta},$ $t\in\mathbb{R},$ and $p=\theta=\frac12,$ we obtain
$$|ABA|-A|B|A\in(\mathcal{L}_{1,\infty})_0.$$
Thus,
$$(ABA)_+-AB_+A=\frac{ABA+|ABA|}{2}-\frac{ABA+A|B|A}{2}=$$
$$=\frac12(|ABA|-A|B|A)\in (\mathcal{L}_{1,\infty})_0.$$
\end{proof}

\begin{lem}\label{commutator small lemma} Let $(X,g)$ be a compact $d$-dimensional Riemannian manifold. Let $f\in L_{\infty}(X,{\rm vol}_g).$ We have
$$[M_f,(1-\Delta_g)^{-\frac{d}{4}}]\in(\mathcal{L}_{2,\infty})_0.$$
\end{lem}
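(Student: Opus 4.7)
The strategy is to reduce to smooth $f$ via pseudo-differential calculus, and then extend to general $f\in L_{\infty}$ by density combined with the Cwikel-type bound of Theorem \ref{solomyak cwikel estimate manifold}.

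Suppose first $f\in C^{\infty}(X)$. The operator $(1-\Delta_g)^{-\frac{d}{4}}$ is a classical pseudo-differential operator of order $-\frac{d}{2}$ on $(X,g)$, and $M_f$ is a smooth zeroth-order operator. By the standard symbol calculus the commutator $[M_f,(1-\Delta_g)^{-\frac{d}{4}}]$ is a classical pseudo-differential operator of order $-\frac{d}{2}-1$ (the gain of one full order coming from the Poisson-bracket principal symbol $\frac{1}{i}\{f,|\xi|_g^{-d/2}\}$). Weyl-type eigenvalue asymptotics for negative-order classical pseudo-differential operators on a compact $d$-dimensional manifold then yield
$$\mu\bigl(k,[M_f,(1-\Delta_g)^{-\frac{d}{4}}]\bigr)=O\bigl(k^{-(d/2+1)/d}\bigr),\qquad k\to\infty,$$
so the commutator lies in $\mathcal{L}_{q,\infty}$ with $q=\frac{2d}{d+2}<2$. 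In particular $(k+1)^{1/2}\mu(k,\cdot)\to 0$, which is the defining condition for membership in $(\mathcal{L}_{2,\infty})_0$.

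Now let $f\in L_{\infty}(X,{\rm vol}_g)$ be arbitrary. Since $M(t)=t\log(e+t)$ satisfies the $\Delta_2$-condition and $X$ has finite measure, we have $L_{\infty}(X,{\rm vol}_g)\subset L_M^{(2)}(X,{\rm vol}_g)$ and smooth functions are dense in $L_M^{(2)}$. Choose $(f_n)\subset C^{\infty}(X)$ with $f_n\to f$ in $L_M^{(2)}(X,{\rm vol}_g)$ (for instance by mollification in local charts and a partition of unity). Writing $T:=(1-\Delta_g)^{-\frac{d}{4}}$, Theorem \ref{solomyak cwikel estimate manifold} gives
$$\|M_{f-f_n}T\|_{2,\infty}\leq C_{X,g}\|f-f_n\|_{L_M^{(2)}}\longrightarrow 0.$$
Since $T$ is self-adjoint and $\mu(S)=\mu(S^\ast)$, we obtain $\|TM_{f-f_n}\|_{2,\infty}=\|M_{\overline{f-f_n}}T\|_{2,\infty}\to 0$ as well. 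Hence $[M_{f_n},T]\to[M_f,T]$ in $\mathcal{L}_{2,\infty}$. Each term in the approximating sequence lies in $(\mathcal{L}_{2,\infty})_0$ by the first paragraph, and $(\mathcal{L}_{2,\infty})_0$ is by definition closed in $\mathcal{L}_{2,\infty}$, so the limit $[M_f,T]$ also lies in $(\mathcal{L}_{2,\infty})_0$.

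The only substantive ingredient is the first paragraph: one needs both the identification of $[M_f,(1-\Delta_g)^{-\frac{d}{4}}]$ as a classical pseudo-differential operator of order $-\frac{d}{2}-1$ for smooth $f$, and the Weyl-type singular-value asymptotics for negative-order classical pseudo-differential operators on a compact manifold. Both are part of the standard pseudo-differential toolbox already invoked in the preceding section (e.g.\ in the proof of Lemma \ref{manifold asymptotic main lemma}). Once these are granted, the rest of the argument is a soft density extension driven by Theorem \ref{solomyak cwikel estimate manifold} and the closedness of the separable part.
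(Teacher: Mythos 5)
Your proof is correct and is essentially the same as the paper's: smooth approximants $f_n\to f$, symbol calculus shows each $[M_{f_n},(1-\Delta_g)^{-d/4}]$ is a $\Psi$DO of order $-\frac{d}{2}-1$ and hence lies in $(\mathcal{L}_{2,\infty})_0$, and Theorem \ref{solomyak cwikel estimate manifold} plus closedness of the separable part transfers this to $f$. You merely fill in more of the detail on the smooth case (the explicit Weyl exponent $q=\tfrac{2d}{d+2}<2$) and are slightly more careful to require convergence in $L_M^{(2)}$ rather than $L_M$, which is the norm the Cwikel estimate actually controls.
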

\begin{proof} Choose a sequence $(f_n)_{n\geq0}\subset C^{\infty}(X,\mathbb{R})$ such that $f_n\to f$ in $L_M(X,{\rm vol}_g).$ By Theorem \ref{solomyak cwikel estimate manifold}, we have
$$[M_{f_n},(1-\Delta_g)^{-\frac{d}{4}}]\to [M_f,(1-\Delta_g)^{-\frac{d}{4}}]$$
in $\mathcal{L}_{2,\infty}$ as $n\to\infty.$ The operators
$$[M_{f_n},(1-\Delta_g)^{-\frac{d}{4}}],\quad n\geq0,$$
are pseudo-differential of order $-\frac{d}{2}-1.$ In particular, they belong to $(\mathcal{L}_{2,\infty})_0.$ The assertion follows immediately.
\end{proof}

\begin{proof}[Proof of Theorem \ref{cif manifold}] Suppose first $f$ is bounded. By Lemma \ref{commutator small lemma} and Lemma \ref{positive part abstract lemma}, we have
$$\Big((1-\Delta_g)^{-\frac{d}{4}}M_f(1-\Delta_g)^{-\frac{d}{4}}\Big)_+-(1-\Delta_g)^{-\frac{d}{4}}M_{f_+}(1-\Delta_g)^{-\frac{d}{4}}\in(\mathcal{L}_{1,\infty})_0.$$
By Lemma \ref{cif bounded positive lemma} and Lemma \ref{first good operator fact}, we have
$$\lim_{t\to\infty}t\mu\Big(t,\Big((1-\Delta_g)^{-\frac{d}{4}}M_f(1-\Delta_g)^{-\frac{d}{4}}\Big)_+\Big)=\frac{{\rm Vol}(\mathbb{S}^{d-1})}{d(2\pi)^d}\int_X f_+d{\rm vol}_g.$$ 
Applying the latter formula to $-f,$ we obtain
$$\lim_{t\to\infty}t\mu\Big(t,\Big((1-\Delta_g)^{-\frac{d}{4}}M_f(1-\Delta_g)^{-\frac{d}{4}}\Big)_-\Big)=\frac{{\rm Vol}(\mathbb{S}^{d-1})}{d(2\pi)^d}\int_X f_-d{\rm vol}_g.$$ 
This proves the assertion for bounded $f.$

Suppose now $f\in L_M(X,{\rm vol}_g).$ Choose a sequence $(f_n)_{n\geq0}\subset L_{\infty}(X,{\rm vol}_g)$ such that $f_n\to f$ in $L_M(X,{\rm vol}_g).$ The assertion follows from the preceding paragraph (applied to $f_n$) and Corollary \ref{limit of good operators corollary}.

\end{proof}

\end{document}